\newtheorem{defi}{Definition}[section]
\newtheorem{exa}{Example}[section]
\newtheorem{lema}{Lemma}[section]
\newtheorem{teo}{Theorem}[section]
\newtheorem{rem}{Remark}[section]
\newtheorem{coro}{Corollary}[section]
\newtheorem{pro}{Proposition}[section]
\newcommand{\su}{$SU_q(2)$}
\begin{document}

\title[GLOBAL PSEUDO-DIFFERENTIAL OPERATORS ON THE QUANTUM GROUP  $SU_q(2)$]
 {GLOBAL PSEUDO-DIFFERENTIAL OPERATORS ON THE QUANTUM GROUP  $SU_q(2)$}

\author[Carlos A. Rodriguez T.]{Carlos A. Rodriguez Torijano}

\address{Department of Mathematics\\
Universidad de los Andes\\
Bogot\'{a}\\
Colombia}

\email{ca.rodriguez14@uniandes.edu.co}


\subjclass[2010]{Primary 81R50 ; Secondary  81R60}

\keywords{Quantum groups, Pseudo-differential operators, $SU_q(2)$, Fourier Analysis }

\date{\today}
\begin{abstract}
In this paper, following \cite{RSM}, we develop the theory of global  pseudo-differential operators defined on the quantum group $SU_q(2)$, and provide some spectral results concerning  these operators. We define a graduation for  this algebra of pseudo-differential operators  in terms of its natural Fourier decomposition and, using the infinite-dimensional representations introduced by Woronowicz \cite{W}, we also provide a $*$-representation of $SU_q(2)$ as bounded pseudo-differential operators acting on $L^2(\mathbb{S}^1)$.
\end{abstract}

\maketitle

\section*{\textbf{INTRODUCTION}}

The interest in quantum groups arises in a very first form during the early eighties. The  works of Drinfield and Jimbo \cite{Drinf} introduced formally the concept as a particular and important example of Hopf algebras. Just one year later, Woronowicz  (\cite{W} and \cite{W2}) presented a complete treatment of the quantum group $SU_q(2)$ by establishing the representations of this algebra and proving the existence of a Haar state associated to it. Being deformations of more rigid structures (as Lie groups and their Lie algebras), it is natural to think of the use of the  generalization of the functional analysis on these objects, associated to their representation theory via the Peter-Weil theorem, to associate to it a pseudo-differential calculus. \\�\\ In  \cite{RSM} the authors introduce a natural class of global pseudo-differential operators on compact quantum groups, in terms of  their distribution theory and the corresponding Fourier analysis, which follows the lines of the already accomplished theory for compact Lie groups in \cite{R.Tbook}, and generalized to homogeneous spaces and compact manifolds in \cite{DR}. On the one hand, in \cite{RSM} this construction is connected with the
algebraic differential calculi on Hopf subalgebras of compact quantum groups, the particular case of the quantum group $SU_q(2)$ being treated in detail and, on the other hand, it is also studied from the point of view of spectral noncommutative geometry \cite{Connes 2} through the notion of spectral triple, which has been also used in different contexts to understand quantum groups as ``noncommutative manifolds" (see e.g.  \cite{Goswami}, \cite{VS}, \cite{BK}, \cite{Connes}and \cite{ConnesSUq(2)}). 
\\ \\
In this paper we will use the ideas of the global pseudo-differential calculus developed by Ruzhansky and Turunen in \cite{R.Tbook} to introduce the concept of pseudo-differential operators defined on the quantum group $SU_q(2)$.  As in   \cite{RSM}, the main tool for this purpose is the version of the Peter-Weyl theorem for this algebra. In contrast with the treatment in   \cite{RSM}, we will define a Fourier order for these pseudo-differential operators in terms of its natural Fourier decomposition, inducing a graduation on this algebra, and we will also provide a formula for the composition in terms of homogeneous components of their symbols. Finally, we will give a representation of  $SU_q(2)$ on the algebra of pseudo-differential operators on $\mathbb{S}^1$  using the infinite-dimensional representations introduced by Woronowicz in \cite{W}.\\ 
\\
Let us describe the contents of the paper. In section 1 we recall some basic results on the  Hopf algebra structure used to define the quantum group $SU_q(2)$, and other aspects of the theory necessary to introduce the basic objects of the analysis (Fourier inversion formula and Plancherel's Identity) used in the definition of the global pseudo-differential calculus (we refer the reader to  \cite{RSM} for more on the Fourier analysis on compact quantum groups). In section 2 we focus on the $*$-representations of $SU_q(2)$  and we construct explicitly a representation on bounded pseudo-differential operators parametrized by the circle. We also prove a result on the distinction  between  the quantum groups $SU_q(2)$ depending on algebraic properties of the parameter $0<q<1$.
In section 3, the interest is focused in the study of the global pseudo-differential operators on $SU_q(2)$ and the graduation induced by the natural Fourier order for the symbols of these operators. A composition formula is established for the corresponding symbol classes. 
In section 4 we prove some results concerning spectral  properties of special type of global pseudo-differential operators, in particular  a result about the Fredholm  index of  global pseudo-differential operators by certain symbols.

 \section{Fourier Analysis on $SU_q(2)$}\label{S:FAonSUq(2)}
 
  In this section we treat the basics on the Fourier analysis for $SU_q(2)$. Recall that, for $0<q<1$, the  quantum group $SU_q(2)$ is the Hopf {*-}algebra generated by the following relations between generators $a,b,c, d$ (see  \cite{KS}, \cite{K} and references therein): $ab=qba,$  $bd=qdb,$ $cd=qdc,$  $bc=cb,$  $ad-qbc=da-q^{-1}bc=1$, with an involution given by $a^*=d,$ $b^*=-qc,$ $c^*=-q^{-1}b,$ and $d^*=a$, plus the Hopf algebra structure is specified as follows:
\begin{itemize} 
\item[i.] $\Delta(a)=a\otimes a+ b\otimes c,$ $\Delta(b)=a\otimes b+ b\otimes d,$ $\Delta(c)=c\otimes a+ d\otimes c,$ $\Delta(d)=c\otimes d+ d\otimes d,$

\item[ii.] The antipodal application is defined by $S(a)=a^{*},$ $S(b)=c,$ $S(c)=-qc^{*},$ and $S(d)=d^{*}$

\item[iii.] The co-unit acts by $\epsilon(a)=1=\epsilon(d)$ and $\epsilon(c)=0=\epsilon(b).$
\end{itemize}

The quantum group $SU_q(2)$ is often thought as a `twisted" version of (the algebra of functions on) the compact Lie group $SU(2)$. If we write
\[ u=\begin{bmatrix}
   a & b \\
    c & d
\end{bmatrix}
=\begin{bmatrix}
   a & -qc^{*} \\
    c & a^{*}
\end{bmatrix}\]
we can see that $SU_q(2)$ has two generators as a $C^{*}$- algebra, namely {\em a} and {\em c},  and that for $q=1$ we obtain the classical $SU(2)$.
Notice that the relations
 $ac^{*}=qc^{*}a,$   $ca^{*}=qa^{*}c$, $c^{*}a^{*}=qa^{*}c^{*}$,  $c^{*}c=cc^{*}$, and  $aa^{*}+q^2c^{*}c=a^{*}a+c^{*}c=1$ 
are equivalent to those given before.

We define a {\em Haar functional}  as
a linear functional $h: \mathfrak{A} \to \mathbb{C}$ on a Hopf algebra $\mathfrak{A}$ which is invariant, i.e. it satisfies the condition
$$((id\otimes h) \circ\Delta)(f) = h(f)I = ((h\otimes id) \circ \Delta)(f),$$ for all $f\in \mathfrak{A}$. In the particular case  $\mathfrak{A}=SU_q(2)$, the existence of the Haar functional and the explicit form of it are known \cite{KS}. Let us recall the theory of co-representations on $SU_q(2)$ before we present  the form of this functional. 
Given a co-algebra  $\mathfrak{A}$ and $V$ a vector space, a linear application  $\phi:V\rightarrow V\otimes \mathfrak{A} $ is called a right co-representation of the co-algebra $\mathfrak{A}$ if it satisfies that 
$$(\phi \otimes id)\circ \phi=(id \otimes \Delta)\circ \phi \;\; {\rm and} \;\; (id\otimes \epsilon)\circ \phi=id.$$ In the same way, a linear application  $\phi:V\rightarrow  \mathfrak{A}\otimes V$ is called a left co-representation  if it satisfies that 
$$(  id \otimes \phi)\circ \phi=(\Delta \otimes id  )\circ \phi \;\; {\rm and} \;\; (\epsilon \otimes id )\circ \phi=id.$$ A subspace $W\subseteq V$ is said to be invariant under the right co-representation if $\phi(W)\subseteq W\otimes \mathfrak{A},$ and it is said to be left co-invariant if $\phi(W)\subseteq  \mathfrak{A}\otimes W.$

 The  quantum complex plane is defined as the complex algebra generated by the elements $x,y$ under the relation 
$xy=qyx,$ that is to say the quotient algebra $\mathbb{C}(x,y)/(xy-qyx).$  The  finite dimensional linear spaces of homogeneous polynomials on the  quantum complex variables $x, y$ are natural spaces on which $SU_q(2)$ co-acts. 
 If $\mathbb{C}^2_q$ denotes the quantum complex plane, then $\mathbb{C}^2_q$ is a left and right co-module algebra of 
$SU_q(2)$ with right coaction determined by
$R(x)=x\otimes a+y\otimes c$, $R(y)=x\otimes b+y\otimes d$, see \cite{K}.
 
As it is the case for the classical Lie group $SU(2)$, the spaces of homogeneous polynomials come into play as the unique irreducible co-representations  of 
$SU_q(2)$. These are labeled by the set 
$\frac{1}{2}\mathbb{N}$ for which the corresponding co-representation is denoted by $T^{(l)}$ acting on the polynomials of degree $2l$ whose complex dimension is $2l+1.$ The co-representation $T^{(l)}$ has an associated matrix with entries belonging to $SU_q(2),$ namely 
$$T^{(l)}:=[t_{ij}^{l}]_{2l+1\times 2l+1},$$ 
where the $t_{ij}^{l}$ are polynomials in the generators $a$ and $c$ of the $*$-Hopf algebra. We remind that the general form of the generators  $t_{ij}^{l}$ is given by polynomials involving $a$, $c$ and their adjoints having the following form, see \cite{KS}:
 
\begin{itemize}
\item[i.] $P(-q^2(c^{*}c))a^{m +n}c^{m-n}$ if $m+n \geq 0, n\leq m.$
\item[ii.] $P(-q^2(c^{*}c))a^{m +n} (-qc^{*})^{n-m}$ if $m+n \geq 0, m\leq n.$
\item[iii.] $P(-q^2(c^{*}c)) (-qc^{*})^{m-n}(a^{*})^{-m- n}$ if $m+n \leq 0, n\leq m.$
\item[iv.] $P(-q^2(c^{*}c))c^{n-m}(a^{*})^{-m- n}$ if $m+n \geq 0, m\leq n.$
\end{itemize}
Furthermore, the set $\{t_{ij}^{(l)}: l\in \frac{1}{2}\mathbb{N}\}$ of entries for these irreducible co-representations of $SU_q (2)$ is a orthogonal basis for the underlying vector space of $SU_q(2)$, with respect to the inner product defined by
$\langle y, x \rangle:=h(xy^*)$.  It is known that    $h(t_{ij}^{(l)}.(t_{ij}^{(l)})^*) =[2l+1]_q^{-1}q^{2j}$. Here for any nonzero complex number $q$, and  $x \in \mathbb{C}$,  the $q$-number $[x]_q$ is defined by $[x]_q=\frac{q^x-q^{-x}}{q-q^{-1}}$.
\\ \\From now on we will refer to $SU_q(2)$ as its corresponding  Gelfand-Naimark-Seigel completion, obtained by completing the underlying algebra in the induced topology by the norm  defined by $\|f\|:=(h(ff^{*}))^{\frac{1}{2}}$. 
\\ \\The first ingredient for a Fourier Analysis on this algebra is the concept of the Fourier transform.

\begin{defi}
The {\em Fourier transform} for $f\in SU_q(2)$ is the matrix valued operator  given by $\big(\hat{f}(T^{(l)})\big)_{mn}=h(f(t^{l}_{nm})^{*})$, where $T^{(l)}=[t_{ij}^l]_{-l\leq i,j\leq l}$ denotes an irreducible matrix co-representation. We write $\hat{f}(l):=\hat{f}(T^{(l)})$.
\end{defi}

We have a  Fourier inversion formula in this context and, as a consequence, a decomposition of the algebra of the Peter-Weyl theorem type \cite{MMNNU:k}. Indeed, for a given matrix $A_{n\times n}$ we  define   $$Tr_q(A):=Tr(D_qA)$$ where $D_q:=Diag(q^{-2},..., q^{-2i},..., q^{-2n})$ is a diagonal matrix (we want to point out that, since the algebra is noncommutative, this is not  a trace on the space of matrices with entries on $SU_q (2)$). 
\begin{teo}
Let $f\in SU_q(2)$, then  the {\em Fourier Inversion Formula} is
$$f=\sum_{l\in \frac{1}{2}\mathbb{N}}[2l+1]_qTr_q(\hat{f}(l)T^l ).$$ 
\end{teo}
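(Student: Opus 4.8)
The plan is to expand the Fourier transform definition inside the right-hand side and collapse the double sum using orthogonality of the matrix entries $t_{ij}^{(l)}$ with respect to the inner product $\langle y,x\rangle=h(xy^*)$. First I would write out the $(m,n)$ entry of $\widehat f(l)$, namely $(\widehat f(l))_{mn}=h(f\,(t^l_{nm})^*)$, and then expand the right-hand side as
\[
\sum_{l\in\frac12\mathbb N}[2l+1]_q\,Tr_q\big(\widehat f(l)T^l\big)
=\sum_{l}[2l+1]_q\sum_{m,n} q^{-2?}\,(\widehat f(l))_{mn}\,t^l_{nm},
\]
being careful with how $D_q$ re-indexes the sum; the $q$-power that appears will have to match the normalization $h(t^{(l)}_{ij}(t^{(l)}_{ij})^*)=[2l+1]_q^{-1}q^{2j}$ stated above, and reconciling those two conventions (the indexing of $D_q$ given as $\mathrm{Diag}(q^{-2},\dots,q^{-2n})$ versus the label set $-l\le i,j\le l$) is the bookkeeping I would pin down first.

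Next I would invoke the Peter–Weyl-type statement already recalled in the excerpt: the set $\{t^{(l)}_{ij}\}$ is an orthogonal basis of (the GNS completion of) $SU_q(2)$ with $\langle t^{(l)}_{ij},t^{(l')}_{i'j'}\rangle=h\big(t^{(l)}_{ij}(t^{(l')}_{i'j'})^*\big)=\delta_{ll'}\delta_{ii'}\delta_{jj'}[2l+1]_q^{-1}q^{2j}$. Then any $f$ in the completion has an $L^2$-expansion $f=\sum_{l,i,j}c^l_{ij}t^l_{ij}$, and pairing both sides against $t^l_{ij}$ and dividing by the normalization constant gives $c^l_{ij}=[2l+1]_q q^{-2j}\,h\big(f(t^l_{ij})^*\big)=[2l+1]_q q^{-2j}(\widehat f(l))_{ji}$. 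Substituting this back into $f=\sum c^l_{ij}t^l_{ij}$ and recognizing the sum over $i,j$ as exactly $Tr_q\big(\widehat f(l)T^l\big)=Tr\big(D_q\widehat f(l)T^l\big)$ — the $q^{-2j}$ weights being precisely the diagonal of $D_q$ — yields the claimed formula. So the proof is really: (i) orthogonality expansion, (ii) computation of the coefficients via the known normalization constant, (iii) repackaging the coefficient sum as a $q$-trace.

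The main obstacle is not conceptual but notational: making the weighting conventions consistent. One must verify that the factor $q^{-2j}$ coming from inverting $h(t^{(l)}_{ij}(t^{(l)}_{ij})^*)=[2l+1]_q^{-1}q^{2j}$ is the same weight that $D_q$ inserts when forming $Tr_q(\widehat f(l)T^l)$, and that the transpose built into the definition $(\widehat f(l))_{mn}=h(f(t^l_{nm})^*)$ is exactly what is needed to turn $\sum_{i,j}$ (coefficient) $\times$ (basis element) into a genuine matrix product $\widehat f(l)\,T^l$ under the trace. I would also note that convergence of the series is in the Hilbert-space norm $\|f\|=h(ff^*)^{1/2}$, inherited from the GNS completion, so no separate analytic estimate is required beyond Bessel/Parseval for an orthogonal basis; a brief remark that $\sum_{l,i,j}|c^l_{ij}|^2[2l+1]_q^{-1}q^{2j}=\|f\|^2<\infty$ (a Plancherel identity) closes the argument.
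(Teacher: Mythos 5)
Your proposal is correct and follows essentially the same route as the paper's proof: expand $f=\sum c^l_{ij}t^l_{ij}$ in the orthogonal basis, use the normalization $h(t^{(l)}_{ij}(t^{(l)}_{ij})^*)=[2l+1]_q^{-1}q^{2j}$ to get $c^l_{ij}=[2l+1]_q q^{-2j}(\widehat f(l))_{ji}$, and identify the weighted sum over $i,j$ with $Tr_q(\widehat f(l)T^l)$. Your added remarks on reconciling the $D_q$ indexing and on $L^2$-convergence are sensible bookkeeping that the paper leaves implicit.
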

\begin{proof}
Suppose that $f=\sum c_{ij}^l t_{ij}^l$ is an element of $SU_q(2)$.  Then by the orthogonality properties of the basis elements $t_{ij}^l,$ we have that the complex coefficients are given by $c_{ij}^l=[2l+1]_q q^{-2j}\hat{f}(l)_{ji}.$ This implies that $$f=\sum_{l\in \frac{1}{2}\mathbb{N}} \sum_{ -l\leq i,j\leq l} \left( [2l+1]_q q^{-2j}\hat{f}(l)_{ji}\right) t_{ij}^l.$$ 
Observe that ,  $$ [\hat{f}(l)T^l)]_{jj}=\displaystyle \sum_{-l\leq i \leq l}  \hat{f}(l)_{ji}(T^l)_{ij} \;{\rm and} \;[D_q\hat{f}(l)T^l)]_{jj}=q^{-2
j}\displaystyle \sum_{-l\leq i\leq l}  \hat{f}(l)_{ji}(T^l)_{ij}.$$
Then 
$$f=\sum_{l\in \frac{1}{2}\mathbb{N}}[2l+1]_q Tr_q(\hat{ f}(l)T^l).$$

\end{proof}

We also have a version of the {\em Plancherel's Identity}  \cite{MMNNU:k}.

\begin{teo}
Let $f$ and $g$ elements in $ SU_q(2)$. Then $$h(fg^{*})=\sum_{l\in \frac{1}{2}\mathbb{N}}[2l+1]_qTr_q(  \hat{f}(l)(\hat{g}(l))^{*}).$$
\end{teo}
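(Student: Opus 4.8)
The plan is to deduce Plancherel's Identity from the Fourier Inversion Formula together with the known orthogonality relations of the matrix coefficients $t_{ij}^{(l)}$. First I would reduce to the case $g=f$ by polarization, or simply keep $f$ and $g$ general and expand both in the Peter--Weyl basis: write $f=\sum_{l,i,j} c_{ij}^l t_{ij}^l$ and $g=\sum_{l',i',j'} d_{i'j'}^{l'} t_{i'j'}^{l'}$, so that $h(fg^*)=\sum c_{ij}^l \overline{d_{i'j'}^{l'}}\, h\big(t_{ij}^l (t_{i'j'}^{l'})^*\big)$. The orthogonality of the basis forces $l=l'$, $i=i'$, $j=j'$, and the normalization recalled in Section~\ref{S:FAonSUq(2)}, namely $h\big(t_{ij}^{(l)}(t_{ij}^{(l)})^*\big)=[2l+1]_q^{-1}q^{2j}$, gives $h(fg^*)=\sum_{l}\sum_{-l\le i,j\le l} [2l+1]_q^{-1} q^{2j}\, c_{ij}^l \overline{d_{ij}^l}$.

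Next I would translate the coefficients back into Fourier coefficients using the relation already established in the proof of the Fourier Inversion Formula: $c_{ij}^l=[2l+1]_q q^{-2j}\hat f(l)_{ji}$, and likewise $d_{ij}^l=[2l+1]_q q^{-2j}\hat g(l)_{ji}$. Substituting, the factors of $[2l+1]_q$ and $q^{\pm 2j}$ combine so that the $i,j$ sum becomes $[2l+1]_q \sum_{i,j} q^{-2j}\hat f(l)_{ji}\overline{\hat g(l)_{ji}}$. The remaining step is to recognize this double sum as $Tr_q\big(\hat f(l)(\hat g(l))^*\big)$: indeed $\big(\hat f(l)(\hat g(l))^*\big)_{jj}=\sum_i \hat f(l)_{ji}\overline{\hat g(l)_{ji}}$, and applying $D_q=\mathrm{Diag}(q^{-2j})$ and taking the ordinary trace inserts exactly the weight $q^{-2j}$. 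This yields $h(fg^*)=\sum_{l\in\frac12\mathbb{N}}[2l+1]_q Tr_q\big(\hat f(l)(\hat g(l))^*\big)$, as claimed.

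I expect the only genuine subtlety to be bookkeeping: being careful that the conjugation in the inner product $\langle y,x\rangle=h(xy^*)$ lands on the correct factor, that the index transposition in $\hat f(l)_{mn}=h(f(t^l_{nm})^*)$ is tracked consistently (so that $c_{ij}^l$ really pairs with $\hat f(l)_{ji}$ and not $\hat f(l)_{ij}$), and that the powers of $q$ coming from the non-unimodular weight cancel precisely against those in the normalization $h(t_{ij}^{(l)}(t_{ij}^{(l)})^*)=[2l+1]_q^{-1}q^{2j}$. A secondary point is the interchange of the (a priori infinite) sums with $h$; this is justified because finite linear combinations of the $t_{ij}^l$ are dense and $h$ is continuous for the norm $\|f\|=h(ff^*)^{1/2}$ in which we completed the algebra, so by a standard density-and-continuity argument it suffices to prove the identity for $f,g$ in the algebraic span, where all sums are finite. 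Once the index and $q$-power bookkeeping is set up correctly, the computation is routine.
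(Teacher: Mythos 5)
Your proof is correct and follows essentially the same route as the paper's: both reduce the identity to the orthogonality and normalization $h\bigl(t_{ij}^{(l)}(t_{ij}^{(l)})^*\bigr)=[2l+1]_q^{-1}q^{2j}$ of the Peter--Weyl basis, convert the expansion coefficients into Fourier coefficients via $c_{ij}^l=[2l+1]_q q^{-2j}\hat f(l)_{ji}$, and identify the resulting weighted sum as $Tr_q$. Your version is slightly more careful about the sesquilinearity in $g$ and the density/continuity step needed to pass from the algebraic span to the completion, which the paper leaves implicit.
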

\begin{proof}
First we suppose that $f=t_{ij}^l$ and $g=t_{rs}^m.$  It is true that $$[\hat{f}(r)]_{wt}=[\widehat{t_{ij}^l}(r)]_{wt}=h(t_{ij}^l (t_{tw}^r)^{*})=\delta_{lr}\delta_{it}\delta_{jw}[2l+1]_q^{-1}q^{-2j}.$$ For this reason $[\widehat{t_{ij}^{l}}(l)  \left(\widehat{t_{ij}^{l}}\right)^{*}(l)]_{kk}=\delta_{ik}\delta_{ij} [2l+1]_q^{-2}q^{-4j}.$ Then we have that $$h(fg^{*})=\delta_{ir}\delta_{rs}\delta_{lm}[2l+1]_q Tr_q( \hat{f}(l)(\hat{g}(l))^{*}),$$ where we used Kronecker delta functions.The proof is complete if we consider that the set $\{t_{ij}^l\}$ is a basis for $SU_q(2).$
\end{proof}

\section{Representations of   $SU_q(2)$ on an Algebra of Periodic Pseudo-differential Operators } 
 
 In this section we construct a $*$-representation of $SU_q(2)$ as bounded pseudo-differential operators acting on $L^2(\mathbb{S}^1)$ and introduce the concept of especial unitary representation space in order to emulate a background manifold for $SU_q(2)$. We start  by recalling the following statement about the classification theorem for all the  $*$- representations of $SU_q(2)$ on bounded operators acting  on Hilbert spaces \cite{W}.

\begin{teo}[Woronovicz] \label{Woro} 
For any  complex number $u\in \mathbb{S}^1$ there are two irreducible $*$-representations $\pi_u^1$ and $\pi_u^\infty$ of $SU_q(2)$ with dimensions $1$ and infinite, respectively. These are described as follows:
\begin{itemize}
\item[i.] The action on the one dimensional linear space $\mathbb{C}$ is given by $\pi_u^1(a)z=uz,$ and $\pi_u^1(c)z=0,$ for all $z\in \mathbb{C}.$
\item[ii.]For $\{e_n: n\in \mathbb{N}\}$ an orthonormal basis for a Hilbert space $H,$ we have the action, $\pi_u^\infty(a)e_n=\sqrt{1-q^{2n}}e_{n-1},$ and the element c is represented by  $\pi_u^\infty(c)e_n=q^{n}ue_n,$ where $e_{-1}=0.$
\end{itemize}
\end{teo}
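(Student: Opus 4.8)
The plan is to prove the statement in two stages. First, that the formulas in (i) and (ii) do define bounded $*$-representations which are irreducible; this is a direct computation. For $\pi_u^1$ there is essentially nothing to check ($\pi_u^1(a)$ is the unitary $u\cdot\mathrm{Id}$ on $\mathbb{C}$ and $\pi_u^1(c)=0$). For $\pi_u^\infty$ on $H=\ell^2(\mathbb{N})$ one extends the definition to $b=-qc^*$ and $d=a^*$, notes that each of $\pi_u^\infty(a),\pi_u^\infty(c),\pi_u^\infty(b),\pi_u^\infty(d)$ is bounded of norm at most $1$, and verifies on basis vectors the defining relations together with $a^*a+c^*c=1$ and $aa^*+q^2c^*c=1$. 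Irreducibility follows because, given a nonzero $v=\sum c_ne_n$ in an invariant subspace, repeated application of $\pi_u^\infty(a)$ produces a nonzero multiple of $e_0$, and then repeated application of $\pi_u^\infty(a)^*$ recovers every $e_n$.

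The substantive part is the converse: every irreducible $*$-representation $\pi$ of $SU_q(2)$ on a Hilbert space $H$ is unitarily equivalent to one of these. Write $A=\pi(a)$, $C=\pi(c)$; since $b=-qc^*$ and $d=a^*$, the pair $(A,C)$ determines $\pi$. From $c^*c=cc^*$ the operator $N:=C^*C=CC^*$ is positive and self-adjoint, and from the relations $a^*a+c^*c=1$, $aa^*+q^2c^*c=1$ one gets
\[ A^*A=1-N,\qquad AA^*=1-q^2N, \]
so $0\le N\le 1$ and $\ker A=\ker A^*A=\ker(1-N)$. The relations $ac^*=qc^*a$, $ca^*=qa^*c$, $c^*a^*=qa^*c^*$ give the covariance identity $NA^*=q^2A^*N$ (equivalently $NA=q^{-2}AN$), so, applying the Borel functional calculus to $N$, the operator $A^*$ carries the spectral subspace $\mathbf{1}_S(N)H$ into $\mathbf{1}_{q^2S}(N)H$ and $A$ carries it into $\mathbf{1}_{q^{-2}S}(N)H$, for every Borel $S\subseteq[0,1]$; moreover $C$ and $C^*$ commute with $N$ (since $CN=CC^*C=NC$) and hence preserve every spectral subspace of $N$.

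Now split into two cases. If $N=0$ then $C=0$, the relations collapse to $A$ unitary, the image $\pi(SU_q(2))$ is the commutative $C^*$-algebra generated by this unitary, and irreducibility forces $\dim H=1$ and $A=u\cdot\mathrm{Id}$ with $|u|=1$; this is $\pi_u^1$. If $N\ne 0$, set $\mu=\|N\|\in(0,1]$ and pick $\delta$ with $0<\delta<\mu(1-q^2)$. The spectral projection $E=\mathbf{1}_{(\mu-\delta,\mu]}(N)$ is nonzero because $\mu\in\sigma(N)$; for $v\in EH$ the vector $Av$ has $N$-spectral support contained in $q^{-2}(\mu-\delta,\mu]$, an interval disjoint from $[0,\mu]\supseteq\sigma(N)$ by the choice of $\delta$, so $Av=0$, hence $v\in\ker(1-N)$, i.e.\ $Nv=v$. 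Taking a nonzero such $v$ and comparing $\mathbf{1}_{\{1\}}(N)v=v$ with $\mathbf{1}_{(\mu-\delta,\mu]}(N)v=v$ shows $1\in(\mu-\delta,\mu]$, hence $\mu=1$: the maximum of $\sigma(N)$ is the eigenvalue $1$ and $H_0:=\ker(1-N)=\ker A\ne 0$.

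Finally put $H_n:=\ker(q^{2n}-N)$ for $n\ge 0$. The covariance identity and the two quadratic relations, restricted to eigenspaces, show that $A^*|_{H_n}\colon H_n\to H_{n+1}$ and $A|_{H_{n+1}}\colon H_{n+1}\to H_n$ are unitaries up to the scalar $\sqrt{1-q^{2n+2}}$, so all $H_n$ share the common dimension $\dim H_0$; and $C$ preserves each $H_n$ with $C^*C=q^{2n}$ there, so $C|_{H_n}=q^nW_n$ with $W_n$ unitary on $H_n$, while $ac=qca$ forces $W_{n+1}=U^*W_nU$ for $U$ the unitary part of $A|_{H_{n+1}}$. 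The closed span of $\bigoplus_{n\ge 0}H_n$ is invariant under $A,A^*,C,C^*$, hence equals $H$. If $K\subseteq H_0$ were a nonzero proper $W_0$-invariant closed subspace, transporting $K$ to each $H_n$ by the intertwiners built from the $A$'s would yield a nontrivial closed invariant subspace of $H$, contradicting irreducibility; hence $W_0$ is a scalar $u\in\mathbb{S}^1$ and $\dim H_0=1$, so $\dim H_n=1$ for all $n$. Choosing unit vectors $e_n\in H_n$ with $Ae_n=\sqrt{1-q^{2n}}\,e_{n-1}$ and $Ae_0=0$ (which merely fixes phases) then gives $Ce_n=q^nu\,e_n$, i.e.\ $\pi\cong\pi_u^\infty$. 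The delicate point of the whole argument is this spectral step — controlling the entire spectrum of $N$ near its top and identifying the maximum as the eigenvalue $1$ via spectral projections rather than naive eigenvectors — together with the final irreducibility argument pinning $\dim H_0=1$ and producing the scalar $u$; everything else is bookkeeping with $NA^*=q^2A^*N$ and the two quadratic relations.
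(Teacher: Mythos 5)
The paper does not prove this statement at all: it is quoted as Woronowicz's classification theorem and attributed to \cite{W}, so there is no in-paper argument to compare yours against. Your reconstruction is essentially the standard proof (Woronowicz's original, also in Klimyk--Schm\"udgen): the reduction to the pair $(A,C)$, the identities $A^*A=1-N$, $AA^*=1-q^2N$, $NA^*=q^2A^*N$, the spectral argument forcing $\|N\|=1$ to be an eigenvalue when $N\neq 0$, the ladder of eigenspaces $H_n=\ker(q^{2n}-N)$ with the scaled unitaries $A^*|_{H_n}$, and the final reduction to the unitary $W_0$ on $H_0$. I checked the computations and the spectral step (the choice $\delta<\mu(1-q^2)$ so that $q^{-2}(\mu-\delta,\mu]$ misses $\sigma(N)$) and they are correct; this is a sound and complete route to the classification, which is more than the paper itself supplies.

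Two small points need tightening. First, in the irreducibility of $\pi_u^\infty$: for a general $v=\sum c_ne_n$, repeated application of $A$ yields a vector with \emph{nonzero $e_0$-component}, not a multiple of $e_0$; to isolate $e_0$ inside a closed invariant subspace use, e.g., that $N^k\to P_0$ strongly (where $P_0$ is the projection onto $\mathbb{C}e_0$), and then climb back up with $A^*$. Second, in the last paragraph you take $K\subseteq H_0$ a nonzero proper closed $W_0$-\emph{invariant} subspace; for the transported subspace $\overline{\bigoplus_nK_n}$ to be invariant under $C^*$ as well as $C$ you need $K$ to be $W_0$-\emph{reducing} (invariant under $W_0$ and $W_0^*$). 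This costs nothing since $W_0$ is unitary, hence normal, and a non-scalar normal operator always admits a nontrivial reducing (spectral) subspace; equivalently, argue via the commutant: $\{W_0,W_0^*\}'=\mathbb{C}I$ forces $B(H_0)=\{W_0\}''$ to be commutative, so $\dim H_0=1$. With these two adjustments the proof is complete.
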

It is easy to see that  
$\left(\pi_u^\infty(a)\right)^{*}(e_n)=\pi_u^\infty(a^*)e_n=\sqrt{1-q^{2n+2}}e_{n+1},$ and also that the adjoint operator of $\pi_u^\infty(c)$ is given by $\pi_u^\infty(c^*)e_n=q^{n}\overline{u}e_n.$\\
 The next step in our construction is  to choose and order  the canonical basis for $L^2(\mathbb{S}^1)$ in a suitable way. Indeed, if we denote $e^{-in\theta}:=e_{2n}$ and $e^{in\theta}:=e_{2n-1}$  for each $n\in \mathbb{N}$, then the set $(e_n)_{n\in \mathbb{N}}$ is now an ordered orthonormal basis.\\ 
 
Take a fixed element $\nu \in \mathbb{S}^1.$ Let $\sigma_{c,\nu}: \mathbb{S}^1\times \mathbb{Z} \rightarrow \mathbb{C}$ be the function defined by $\sigma_{c,\nu}(\theta, n)=q^{-2n}\nu$ for  each element  $(\theta,n)\in \mathbb{S}^1 \times  \mathbb{Z}^{-}\cup \{0\} $ and $\sigma_{c,\nu}(\theta, n)=q^{2n-1}\nu$  for $(\theta,n)\in \mathbb{S}^1 \times  \mathbb{Z}^{+} .$ This function is the symbol of the global pseudo-differential operator on the circle (we follow the terminology of (\cite{R.andT}, \cite{R.Tbook}) $$T_{c, \nu}f(\theta)=\sum_{n\in \mathbb{Z} } \hat{f}(n)\sigma_{c,\nu}(\theta, n)e^{i\theta n},$$ acting on functions $f\in L^1(\mathbb{S}^1),$ where $\hat{f}(n)$ represents the Fourier transform of  the periodic function $f$, given by $\hat{f}(n)=\int_{\mathbb{S}^1}f(\theta)e^{-i\theta}d\theta.$
\\Similarly, let $\sigma_{a,\nu}: \mathbb{S}^1\times \mathbb{Z} \rightarrow \mathbb{C}$ be the function defined  in the following way:  $\sigma_{a,\nu}(\theta, 0)=0,$ $\sigma_{a,\nu}(\theta, n)=\sqrt{1-q^{2(2n-1)}}e^{-(2n+1)i\theta}$ for $n\in \mathbb{Z}^{+},$
 and $\sigma_{a,\nu}(\theta, n)=\sqrt{1-q^{2(-2n)}}e^{-2n i\theta}$ for $n\in \mathbb{Z}^{-}.$
 This function is the symbol of the pseudo$-$differential operator on the circle $$T_{a,\nu}f(\theta)=\sum_{n\in \mathbb{Z} } \hat{f}(n)\sigma_{a,\nu}(\theta, n)e^{i\theta n}.$$
 
 \begin{teo}
For each $\nu \in \mathbb{S}^1$, the  global pseudo-differential operators $T_{c, \nu}$ and $T_{a, \nu}$ satisfy the properties of Woronowicz's theorem.

\end{teo}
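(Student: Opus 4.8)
The plan is to show that, under the ordered orthonormal basis $(e_n)_{n\in\mathbb{N}}$ of $L^2(\mathbb{S}^1)$ fixed above (so that $e_0$ is the constant function, $e_{2k-1}=e^{ik\theta}$ and $e_{2k}=e^{-ik\theta}$ for $k\geq 1$), the operators $T_{c,\nu}$ and $T_{a,\nu}$ coincide exactly with the operators $\pi_\nu^\infty(c)$ and $\pi_\nu^\infty(a)$ of Theorem \ref{Woro}. Once this identification is established, the statement is immediate: since $a$ and $c$ generate $SU_q(2)$ as a $C^{*}$-algebra and $T_{a,\nu}=\pi_\nu^\infty(a)$, $T_{c,\nu}=\pi_\nu^\infty(c)$, the $*$-representation generated by the assignment $a\mapsto T_{a,\nu}$, $c\mapsto T_{c,\nu}$ is precisely $\pi_\nu^\infty$; hence it is an irreducible $*$-representation of $SU_q(2)$ by bounded operators on $L^2(\mathbb{S}^1)$, and as $\nu$ runs over $\mathbb{S}^1$ these realize all the infinite-dimensional representations in Woronowicz's classification.

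First I would compute the action of $T_{c,\nu}$ on basis elements. Since the symbol $\sigma_{c,\nu}(\theta,n)$ does not depend on $\theta$, $T_{c,\nu}$ is a Fourier multiplier, so $T_{c,\nu}e^{im\theta}=\sigma_{c,\nu}(\theta,m)e^{im\theta}$; substituting $m=-k\leq 0$ and $m=k>0$ and translating into the index $n\in\mathbb{N}$ of the reordered basis yields $T_{c,\nu}e_n=q^{n}\nu\, e_n$ in both cases, which is exactly $\pi_\nu^\infty(c)e_n$ with $u=\nu$. Next I would do the same for $T_{a,\nu}$: feeding $e^{im\theta}$ into the defining sum, the product $\sigma_{a,\nu}(\theta,m)e^{im\theta}$ collapses to a single exponential, and a short computation (treating $m=0$, $m>0$ and $m<0$ separately, using the weight $\sqrt{1-q^{2(2m-1)}}$ for $m>0$ and $\sqrt{1-q^{-4m}}$ for $m<0$) shows that $T_{a,\nu}e_n=\sqrt{1-q^{2n}}\,e_{n-1}$ with the convention $e_{-1}=0$, matching $\pi_\nu^\infty(a)e_n$ on the nose; in particular $T_{a,\nu}e_0=0$. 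Along the way one notes boundedness of both operators ($T_{c,\nu}$ is diagonal with entries of modulus $\leq 1$, and $T_{a,\nu}$ sends the orthonormal basis to an orthogonal family of vectors of norm $\leq 1$), so these equalities hold as identities of bounded operators on $L^2(\mathbb{S}^1)$.

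With the identification in place I would close the argument by invoking Theorem \ref{Woro}: the defining relations of $SU_q(2)$ together with the $*$-relations $a^{*}=d$, $c^{*}=-q^{-1}b$ are inherited by $T_{a,\nu},T_{c,\nu}$ from $\pi_\nu^\infty(a),\pi_\nu^\infty(c)$, as is irreducibility. If a self-contained justification of irreducibility is wanted, it follows directly from the explicit formulas: $T_{c,\nu}$ is diagonalizable with the pairwise distinct eigenvalues $q^n\nu$ (distinct because $0<q<1$), so every closed invariant subspace is the closed span of a set of $e_n$'s, and the weighted shifts $T_{a,\nu}$ and $T_{a,\nu}^{*}$ — the latter with all weights $\sqrt{1-q^{2n+2}}$ nonzero — connect any such $e_{n_0}$ to every $e_n$, forcing the subspace to be $0$ or all of $L^2(\mathbb{S}^1)$. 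The only delicate point in the whole proof is the bookkeeping in the computation of $T_{a,\nu}$: one must carefully match the circle Fourier index $m\in\mathbb{Z}$ with the Hilbert-space index $n\in\mathbb{N}$ across the two interleaved subsequences of exponentials, so that the weight comes out as $\sqrt{1-q^{2n}}$ and the shift is $e_n\mapsto e_{n-1}$ rather than $e_n\mapsto e_{n+1}$; everything else is routine.
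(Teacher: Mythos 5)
Your proposal is correct and follows essentially the same route as the paper: a direct computation of $T_{c,\nu}$ and $T_{a,\nu}$ on the reordered basis $(e_n)_{n\in\mathbb{N}}$, identifying them with $\pi_\nu^\infty(c)$ and $\pi_\nu^\infty(a)$ (your added remarks on boundedness and irreducibility go beyond what the paper verifies, but are sound). One caveat on the bookkeeping you rightly flag as delicate: with the symbol exponent as printed, $\sigma_{a,\nu}(\theta,n)=\sqrt{1-q^{2(2n-1)}}\,e^{-(2n+1)i\theta}$ for $n>0$, one actually gets $T_{a,\nu}(e_{2n-1})=\sqrt{1-q^{2(2n-1)}}\,e^{-(n+1)i\theta}=\sqrt{1-q^{2(2n-1)}}\,e_{2n+2}$ rather than $e_{2n-2}$, so the exponent must be read as $e^{(1-2n)i\theta}$ for the identification with $\pi_\nu^\infty(a)$ to hold --- a slip that the paper's own proof shares.
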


\begin{proof}
Indeed, for each $n\in \mathbb{N}$, a straightforward computation shows that:
 
 \begin{itemize}
 \item[] $T_{c, \nu}(e_{2n})=T_{c,\nu}(e^{-in\theta})=\sigma_{c,\nu}(\theta, -n)e^{-in\theta}=q^{2n}\nu e^{-in\theta}=q^{2n}\nu e_{2n}$,
 
 \item[] $T_{c, \nu}(e_{2n-1})=T_{c,\nu}(e^{in\theta})=\sigma_{c,\nu}(\theta,n)e^{in\theta}=q^{2n-1}\nu e^{in\theta}=q^{2n-1}\nu e_{2n-1}$,
 
 \item[] $T_{a,\nu}(e_{2n})=T_{a,\nu}(e^{-in\theta})=\sigma_{a,\nu}(\theta, -n)e^{-in\theta}=\sqrt{1-q^{2(2n)} }e^{i2n\theta}e^{-in\theta}$  $=\sqrt{1-q^{2(2n)} }e^{in\theta}=\sqrt{1-q^{2(2n)} } e_{2n-1}$, 
 
 \item[] and
 
 \item[] $T_{a,\nu}(e_{2n-1})=T_{a,\nu}(e^{in\theta})=\sigma_{a,\nu}(\theta, n)e^{in\theta}=\sqrt{1-q^{2(2n-1)} }e^{-(2n+1)i\theta}e^{in\theta}$
 \\ $=\sqrt{1-q^{2(2n-1)} }e^{(-n-1)\theta}=\sqrt{1-q^{2(2n-1)} } e_{2n-2}.$
 \end{itemize}
 \end{proof}

Thus, by theorem \ref{Woro}, the  pseudo-differential operators $T_{c,\nu}$ and $T_{a,\nu}$ generate a unique $C^{*}$-algebra which is a representation of $SU_q(2)$ on the Hilbert space $L^2(\mathbb{S}^1)$, for each $\nu \in \mathbb{S}^1$. 

\begin{rem}
An interesting analytical property is that all the elements of this algebra  are bounded as operators from $L^p(\mathbb{S}^1)$ to  $L^p(\mathbb{S}^1)$.
\end{rem}

 In the existent literature on the quantum group $SU_q(2)$  it is only described by mean of the generators of the algebra. As a noncommutative algebra, this algebra does not corresponds to the algebra of functions on a manifold but, on the lines of noncommutative geometry \cite{Connes}, it is interpreted as the algebra of functions of some `noncommutative background manifold".  In the aim of keeping track with the natural comparison with $SU(2)$ we introduce the following representation.
 
\begin{defi}
Let $q\in (0,1)$ be a real number and $H$ be an infinite dimensional Hilbert space on which $SU_q(2)$ acts according to the Woronowicz's theorem  (c.f. theorem \ref{Woro}). We define the {\em especial unitary representation space of} $SU_q(2)$ as the set $$SU^{H}_q(2)=\{X_z : z \in \mathbb{S}^1 \}\subset M_{2\times 2}(\mathbb{C})\otimes B(H) ,$$ where $X_z=
 {\begin{bmatrix}
\pi_z^\infty(a) &-q\pi_z^\infty(c^*) \\
\pi_z^\infty(c) &\pi_z^\infty(a^*) \\
 \end{bmatrix} } \,,$ and $B(H)$ represents the Banach algebra of bounded linear operators acting on $H.$
\end{defi}
 From this point of view the background space corresponding to the coordinate algebra $SU_q(2)$ is the one behind this matrix representation of operators parametrized by a circle. The composition in this representation reminds the usual one for $2 \times 2$ matrices in classical $SU(2)$:

\begin{pro}
Let $q\in (0,1),$ be a real number and $z,$ and $z'$ be complex numbers on the circle. Consider the extension of the matrix product to the the space $SU^{H}_q(2)$. Then, for any pair $X_z$ and $ X_{z'} $  $\in SU^{H}_q(2) $, we have  
\[X_z X_{z'} =
\left[ {\begin{array}{cc}
X_{11}&-X_{21}^* \\
X_{21}&X_{11} ^*\\
 \end{array} } \right]\,,
\]
where $$X_{11}(e_n)=\pi_{zz'}^\infty(a)(\sqrt{1-q^{2n}}e_{n-1}-\frac{\bar{z}z' q^{2n+1}}{\sqrt{1-q^{2n+2}}} e_{n+1})$$ and
 $$X_{21}(e_n)=\pi_{zz'}^\infty(c) (\sqrt{1-q^{2n}}\bar{z'}e_{n-1}+\frac{\sqrt{1-q^{2n+2}}\bar{z}}{q}e_{n+1}).$$

\end{pro}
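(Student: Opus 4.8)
The plan is to evaluate the matrix product $X_zX_{z'}$ entrywise and then apply each of the four resulting bounded operators to a generic basis vector $e_n$, comparing the outcome with the asserted expressions; note that, since $z\neq z'$, the matrix $X_zX_{z'}$ is not an image of the comultiplication, so there is no \emph{a priori} reason for the product to keep the special shape $\bigl(\begin{smallmatrix}P&-qQ^*\\ Q&P^*\end{smallmatrix}\bigr)$ and this has to be checked by hand. Abbreviate $P=\pi_z^\infty(a)$, $Q=\pi_z^\infty(c)$, $P'=\pi_{z'}^\infty(a)$, $Q'=\pi_{z'}^\infty(c)$; since $\pi_z^\infty$ is a $*$-representation one has $\pi_z^\infty(a^*)=P^*$ and $\pi_z^\infty(c^*)=Q^*$, and likewise for $z'$, so that $X_z=\bigl(\begin{smallmatrix}P&-qQ^*\\ Q&P^*\end{smallmatrix}\bigr)$ and $X_{z'}=\bigl(\begin{smallmatrix}P'&-qQ'^*\\ Q'&P'^*\end{smallmatrix}\bigr)$. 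The ordinary product of $2\times 2$ matrices gives
$$X_zX_{z'}=\begin{bmatrix} PP'-qQ^*Q' & -q\bigl(PQ'^*+Q^*P'^*\bigr)\\ QP'+P^*Q' & P^*P'^*-qQQ'^* \end{bmatrix},$$
so the first step is simply to set $X_{11}:=PP'-qQ^*Q'$ and $X_{21}:=QP'+P^*Q'$, which are the $(1,1)$ and $(2,1)$ entries.

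Second, I would compute $X_{11}e_n$ and $X_{21}e_n$ using only Theorem~\ref{Woro} together with the adjoint formulas recorded immediately after it, $\pi_u^\infty(a^*)e_n=\sqrt{1-q^{2n+2}}\,e_{n+1}$ and $\pi_u^\infty(c^*)e_n=q^n\overline{u}\,e_n$. Because $Q,Q',Q^*,Q'^*$ act diagonally while $P,P',P^*$ are one-step shifts, $X_{11}e_n$ is a combination of $e_{n-2}$ and $e_n$ and $X_{21}e_n$ a combination of $e_{n-1}$ and $e_{n+1}$, and the coefficients are read off at once. To match these with the claimed factored forms $X_{11}e_n=\pi_{zz'}^\infty(a)(\cdots)$ and $X_{21}e_n=\pi_{zz'}^\infty(c)(\cdots)$ one only has to expand those right-hand sides with the same formulas, use $|z|=|z'|=1$ freely to collapse the monomials in $z,z',\bar z,\bar{z'}$, and observe that the $\sqrt{1-q^{2n+2}}$ occurring in the denominator of the factored expression is cancelled exactly by the factor $\sqrt{1-q^{2(n+1)}}$ that $\pi_{zz'}^\infty(a)$ (resp.\ $\pi_{zz'}^\infty(c)$) produces on $e_{n+1}$ — that cancellation is precisely what makes the factored form available.

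Third, it remains to identify the two anti-diagonal entries with $X_{11}^*$ and with the asserted multiple of $X_{21}^*$. For the $(2,2)$ entry this is painless: $P^*=\pi_z^\infty(a^*)$ does not depend on the parameter, so $P^*P'^*=P'^*P^*$, and $QQ'^*=Q'^*Q$ since both are diagonal, whence $P^*P'^*-qQQ'^*=(PP'-qQ^*Q')^*=X_{11}^*$. For the $(1,2)$ entry the key is that the $q$-commutation relations of $SU_q(2)$ (here $ac^*=qc^*a$ and $c^*a^*=qa^*c^*$) persist across the two \emph{different} representations: because $c,c^*$ act by the diagonal operators $q^nu$, $q^n\overline{u}$, the scalar $q^n$ absorbs the one-step shift from $a$, $a^*$, so that $PQ'^*=qQ'^*P$ and $Q^*P'^*=qP'^*Q^*$; substituting this into $(QP'+P^*Q')^*=P'^*Q^*+Q'^*P$ rewrites $-q\bigl(PQ'^*+Q^*P'^*\bigr)$ as a scalar multiple of $X_{21}^*$. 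I expect this last step to be the main obstacle: it is where one must keep strict track of the powers of $q$ emitted by the commutation relations — in particular double-checking exactly which power of $q$ ends up in front of $X_{21}^*$ in the upper-right corner — and where the boundary case $n=0$ (with $e_{-1}=0$, equivalently $\sqrt{1-q^{2n}}=0$) has to be inspected so that no spurious term survives.
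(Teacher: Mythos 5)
Your first two steps reproduce the paper's own proof: the paper likewise expands the $2\times 2$ operator matrix product, evaluates the $(1,1)$ and $(2,1)$ entries on a basis vector $e_n$, matches them with the factored expressions $\pi_{zz'}^\infty(a)(\cdots)$ and $\pi_{zz'}^\infty(c)(\cdots)$, and then stops --- it never checks the two anti-diagonal entries. On the part the paper actually proves your route is identical and correct; your remark about the cancellation of $\sqrt{1-q^{2n+2}}$ is exactly what makes the factored forms work, and the boundary case $n=0$ is harmless since $\sqrt{1-q^{2\cdot 0}}=0$.

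Your third step is where you go beyond the paper, and it is also where the difficulty you anticipate actually materializes. The $(2,2)$ identification goes through as you describe: $P$ and $P^*$ do not depend on the circle parameter and the $c$-operators are diagonal, so $P^*P'^*-qQQ'^*=(PP'-qQ^*Q')^*=X_{11}^*$. But for the $(1,2)$ entry your own commutation relations give $PQ'^*=q\,Q'^*P$ and $Q^*P'^*=q\,P'^*Q^*$, hence
$$-q\bigl(PQ'^*+Q^*P'^*\bigr)=-q^2\bigl(Q'^*P+P'^*Q^*\bigr)=-q^2X_{21}^*,$$
not $-X_{21}^*$ as the proposition asserts. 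One can confirm this directly on basis vectors: the $(1,2)$ entry sends $e_n$ to $-q^{n+1}\bar{z'}\sqrt{1-q^{2n}}\,e_{n-1}-q^{n+2}\bar z\sqrt{1-q^{2n+2}}\,e_{n+1}$, while $-X_{21}^*e_n=-q^{n-1}\bar{z'}\sqrt{1-q^{2n}}\,e_{n-1}-q^{n}\bar z\sqrt{1-q^{2n+2}}\,e_{n+1}$. So the upper-right entry in the statement is off by a factor of $q^2$; this is a defect of the proposition as printed, which the paper's proof does not detect because it omits the anti-diagonal check altogether. Your plan, carried out honestly, would establish the corrected identity $(1,2)=-q^2X_{21}^*$ rather than the one claimed, and you should state that correction explicitly instead of trying to force the printed coefficient.
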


\begin{proof}
We have that  $X_z X_{z'}=$  \[ 
\left[ {\begin{array}{cc}
\pi_z^\infty(a)\pi_{z'}^\infty(a)+ (-q)\pi^\infty_z(c^{*})\pi^\infty_{z'}(c) &\pi^\infty_z(a) (-q)\pi^\infty_{z'}(c^{*})+(-q)\pi_z^\infty(c^{*} )\pi^\infty_{z'}(a^{*})\\
\pi_z^\infty(c)\pi_{z'}^\infty(a)+ \pi^\infty_z(a^{*})\pi^\infty_{z'}(c)&  \pi_z^\infty(c)(-q)\pi_{z'}^\infty(c^{*})+ \pi^\infty_z(a^{*})\pi^\infty_{z'}(a^{*})\\
 \end{array} } \right]\,.
 \]  
Now, from the explicit form of the representation we can see that $$\big( \pi_z^\infty(a)\pi_{z'}^\infty(a)+ (-q)\pi^\infty_z(c^{*})\pi^\infty_{z'}(c)\big)(e_n)=\sqrt{1-q^{2n}}\sqrt{1-q^{2n-2}}e_{n-2}+(-q)q^{2n}\bar{z}z' e_n$$ $$=\pi_{zz' }^\infty(a)(\sqrt{1-q^{2n}}e_{n-1}-\frac{\bar{z}z' q^{2n+1}}{\sqrt{1-q^{2n+2}}} e_{n+1}),$$
and $$\big(\pi_z^\infty(c)\pi_{z'}^\infty(a)+ \pi^\infty_z(a^{*})\pi^\infty_{z'}(c)\big)(e_n)=q^{n-1}\sqrt{1-q^{2n}}ze_{n-1}+q^n z'\sqrt{1-q^{2n+2}}e_{n+1}$$ $$=\pi_{zz'}^\infty(c) (\sqrt{1-q^{2n}}\bar{z'}e_{n-1}+\frac{\sqrt{1-q^{2n+2}}\bar{z}}{q}e_{n+1}),$$ which ends the proof.
 \end{proof}

  \begin{rem}
Using the already defined concept of special unitary  representation space $SU^H_q(2)$, we can think on the elements of the algebra $SU_q(2)$ as "functions" on this space with values in $B(H)$ in the following way: for the generator $c\in SU_q(2)$ and $x\in \mathbb{S}^1$ we define $c(x)=\pi^\infty_x(c),$ and similarly $a(x)=\pi^\infty_x(a).$ In this sense, for $f \in SU_q(2)$,  $f(x):=\pi^\infty_x(f),$ after the identification  $X_x\equiv x$ for $X_x\in SU^H_q(2)$. 
\end{rem}

We have a version for $SU_q^H(2)$ of the regular representation of a compact Lie group $G$ on $L^2(G)$, via the parametrization of $SU^H_q(2)$   by the compact Lie group $\mathbb{S}^1$. 
\begin{pro}\label{A2}
Define the map $\phi:\mathbb{S}^1\rightarrow \mathfrak{U}(SU_q(2))$ by $\phi(v)=\phi_v$ where $$\phi_v(f)(x):=f(xv)$$ for $v,x\in \mathbb{S}^1$, and $f \in SU_q(2)$.  
Then this map induces a  unitary representation of $SU^H_q(2)$ via the parametrization  by the compact Lie group $\mathbb{S}^1$.
\end{pro}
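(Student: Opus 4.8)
The plan is to identify the operator $\phi_v$ with the gauge (circle) action on $SU_q(2)$ and then to read off every assertion --- well-definedness, the homomorphism property, unitarity and strong continuity --- from the Peter--Weyl basis $\{t_{ij}^{(l)}\}$. For $v\in\mathbb{S}^1$, let $\tau_v$ be the unique unital $*$-homomorphism of $SU_q(2)$ determined on generators by $\tau_v(a)=a$, $\tau_v(a^*)=a^*$, $\tau_v(c)=vc$ and $\tau_v(c^*)=\bar v c^*$. The first step is to verify that $\tau_v$ is well defined: substituting these into the relations $ac^*=qc^*a$, $ca^*=qa^*c$, $c^*a^*=qa^*c^*$, $c^*c=cc^*$, $aa^*+q^2c^*c=1$ and $a^*a+c^*c=1$, the scalars $v,\bar v$ cancel in every case (each relation is homogeneous of degree $0$ in $c$, and $c^*c$ is fixed by $\tau_v$), and the $*$-structure is respected since $\tau_v(c)^*=\bar v c^*=\tau_v(c^*)$; as $\tau_{v^{-1}}$ is a two-sided inverse, each $\tau_v$ is a $*$-automorphism. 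Comparing with Theorem~\ref{Woro} one checks $\pi_x^\infty(\tau_v(f))=\pi_{xv}^\infty(f)$ for every generator, hence for all $f\in SU_q(2)$ by multiplicativity and continuity; under the identification of the preceding Remark this reads $\tau_v(f)(x)=f(xv)$, i.e.\ $\phi_v=\tau_v$. Here one uses that the family $(\pi_x^\infty)_{x\in\mathbb{S}^1}$ separates the points of $SU_q(2)$, which is precisely what makes the defining formula for $\phi_v$ unambiguous.

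Granting this, $\phi$ is a group homomorphism because $\tau_v\tau_w=\tau_{vw}$ and $\tau_1=\mathrm{id}$ already on the generators, and $\mathbb{S}^1$ is abelian. For unitarity, recall that $\{t_{ij}^{(l)}:l\in\frac{1}{2}\mathbb{N}\}$ is an orthogonal basis of the GNS completion $L^2(SU_q(2))$ of $SU_q(2)$ for the norm $\|f\|=h(ff^*)^{1/2}$. Since $\tau_v$ is multiplicative and fixes $a$, $a^*$ and $c^*c$, the explicit forms (i.)--(iv.) of the $t_{ij}^{(l)}$ show that $\tau_v(t_{ij}^{(l)})=v^{\,d(i,j,l)}t_{ij}^{(l)}$, where $d(i,j,l)\in\mathbb{Z}$ is the (signed) exponent of $c$, respectively $c^*$, occurring in $t_{ij}^{(l)}$; thus $\tau_v$ is diagonal in this basis with unimodular eigenvalues. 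An invertible linear map that sends the orthogonal basis $\{t_{ij}^{(l)}\}$ to $\{v^{\,d(i,j,l)}t_{ij}^{(l)}\}$ preserves each norm $\|t_{ij}^{(l)}\|$ and all the orthogonality relations $\langle t_{ij}^{(l)},t_{rs}^{(m)}\rangle$, hence extends to a unitary operator $\phi_v$ on $L^2(SU_q(2))$, with $\phi_v^{*}=\phi_v^{-1}=\phi_{v^{-1}}$.

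Finally, strong continuity of $v\mapsto\phi_v$ follows from the same diagonalization: on each basis vector the orbit map $v\mapsto\phi_v(t_{ij}^{(l)})=v^{\,d(i,j,l)}t_{ij}^{(l)}$ is norm-continuous, the operators $\phi_v$ are uniformly bounded (being unitary), and finite linear combinations of the $t_{ij}^{(l)}$ are dense in $L^2(SU_q(2))$, so an $\varepsilon/3$ argument gives continuity of $v\mapsto\phi_v(f)$ for every $f$. Hence $\phi$ is a strongly continuous unitary representation of the compact Lie group $\mathbb{S}^1$ on $L^2(SU_q(2))$, which is the announced analogue, for $SU^{H}_q(2)$, of the regular representation of a compact Lie group on its $L^2$-space. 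The step that requires the most care --- and which I expect to be the main obstacle --- is the first one: checking that $\tau_v$ respects all the defining relations together with the involution, and that the family $(\pi_x^\infty)_{x}$ separates points, so that the stated formula really defines an operator on $SU_q(2)$. Once $\phi_v$ is recognized as this gauge automorphism, unitarity is essentially automatic, since the orthogonal Peter--Weyl basis diagonalizes the action with unimodular eigenvalues, and the only analytic input, strong continuity, reduces to continuity of the scalar monomials $v\mapsto v^{\,d(i,j,l)}$.
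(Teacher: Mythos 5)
Your proposal is correct and follows essentially the same route as the paper: identify $\phi_v$ with the gauge automorphism acting on generators by $a\mapsto a$, $c\mapsto vc$, and deduce unitarity from the fact that this action is diagonal on the orthogonal Peter--Weyl basis $\{t_{ij}^{(l)}\}$ with unimodular eigenvalues. Your write-up is in fact more complete than the paper's, which omits the verification that the gauge map respects the defining relations, the identification $\pi^\infty_x(\tau_v(f))=\pi^\infty_{xv}(f)$ linking it to the formula $f(xv)$, and the strong-continuity argument.
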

\begin{proof}
First, it is clear that $$\phi_{uv}(f)(x):=f(xuv)=(\phi_{v})\circ (\phi_{u})(f)(x).$$ In addition,  $\phi_1(f)(x):=f(x),$ thus 
$\phi_1:=\phi(1)=Id_{SU_q(2)}.$
\\Now we show that $\phi_v:=\phi(v)\in \mathfrak{U}(SU_q(2)).$ Indeed, from the action on the generators $a,c,a^{*}, c^{*}$, and the form of $t_{ij}^l$ reminded before, we have
that $\phi_v (a)=a$, $\phi_v (a^{*})=a^{*}$, $\phi_v(c)= vc$ and $\phi_v(c^{*})= \bar{v}c^{*}$. From this we conclude that $$\langle \phi_v(t_{ij}^{(l)}), (t_{ij}^{(l)})\rangle=h(\phi_v(t_{ij}^{(l)}) (t_{ij}^{(l)})^{*})=h(t_{ij}^{(l)} \phi_v(t_{ij}^{(l)})^{*})=\langle  (t_{ij}^{(l)}) , \phi_v(t_{ij}^{(l)}) \rangle.$$
\end{proof}

Let us mention the following  result on the compactness of $SU^H_q(2)$.

\begin{teo}
For every $q\in (0,1)$ we have that $SU_q(2)^H$ is a compact subset of the topological space $M_{2\times 2}(\mathbb{C})\otimes\mathbb{B}(H)$ considering the product topology in this space.
\end{teo}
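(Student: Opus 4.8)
The plan is to realize $SU^H_q(2)$ as the continuous image of the compact group $\mathbb{S}^1$ under the defining parametrization $z\mapsto X_z$, and then to invoke the elementary fact that the continuous image of a compact space is compact (no separation axiom on the target being needed).

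The first step is to make the dependence of $X_z$ on $z$ completely explicit. Reading off the formulas of Theorem \ref{Woro}, the operators $\pi_z^\infty(a)$ and $\pi_z^\infty(a^{*})$ do not depend on $z$ at all; set $A:=\pi_z^\infty(a)$, so that $Ae_n=\sqrt{1-q^{2n}}\,e_{n-1}$ and $A^{*}e_n=\sqrt{1-q^{2n+2}}\,e_{n+1}$, and $A\in B(H)$ with $\|A\|\le 1$. For the other generator one has $\pi_z^\infty(c)=z\,C$ and $\pi_z^\infty(c^{*})=\overline{z}\,C$, where $C:=\pi_1^\infty(c)$ is the fixed diagonal operator $Ce_n=q^n e_n$, which lies in $B(H)$ and satisfies $\|C\|\le 1$. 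Hence
\[ X_z=\begin{bmatrix} A & -q\,\overline{z}\,C \\ z\,C & A^{*}\end{bmatrix}, \]
so that only the two off-diagonal entries vary with $z$, each of them being a scalar multiple of the single bounded operator $C$ by a continuous scalar function of $z$.

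The second step is to check that the map $\Phi:\mathbb{S}^1\to M_{2\times 2}(\mathbb{C})\otimes B(H)$, $\Phi(z):=X_z$, is continuous. For the product topology — i.e. the topology making each of the four matrix entries continuous into $B(H)$, the latter carrying its norm topology; the argument remains valid for any coarser choice, such as the strong or weak operator topology taken entrywise — it suffices to treat the entries one at a time, since a map into a product is continuous if and only if each of its components is. The two diagonal entries are constant, hence continuous; for the off-diagonal entries one has $\|zC-z'C\|=|z-z'|\,\|C\|\le|z-z'|$ and $\|-q\overline{z}C+q\overline{z'}C\|\le q\,|z-z'|$, so both are Lipschitz in $z$ (using that $z\mapsto\overline{z}$ is continuous on $\mathbb{S}^1$). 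Therefore $\Phi$ is continuous, and since $\mathbb{S}^1$ is compact and $SU^H_q(2)=\Phi(\mathbb{S}^1)$ by definition, $SU^H_q(2)$ is compact.

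I do not expect a genuine obstacle in this argument; the two points deserving care are (i) pinning down exactly which topology ``product topology'' refers to and reducing the whole question to an entrywise continuity check, rather than attempting to work with the noncommutative (and here largely irrelevant) multiplicative structure of $M_{2\times 2}(\mathbb{C})\otimes B(H)$, and (ii) resisting the tempting but incorrect shortcut of deducing compactness from boundedness together with closedness: the set $SU^H_q(2)$ is indeed norm-bounded (uniformly in $z$, since $\|A\|,\|C\|\le 1$), but in this infinite-dimensional ambient space boundedness says nothing about compactness, which is precisely why the compactness of the parameter circle $\mathbb{S}^1$ is the essential ingredient.
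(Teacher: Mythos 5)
Your argument is correct and is essentially the paper's own proof: both realize $SU_q^H(2)$ as the image of the compact circle under the parametrization $z\mapsto X_z$ and conclude by continuity. The paper merely asserts that this map is ``easily seen to be continuous,'' whereas you supply the (correct) verification that only the off-diagonal entries depend on $z$ and do so Lipschitz-continuously in operator norm.
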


\begin{proof}
Let $q\in (0,1)$ be a given real number. Consider the natural map $\phi_q: \mathbb{S}^1 \rightarrow M_{2\times 2}(\mathbb{C})\otimes\mathbb{B}(H),$  which acts as $\phi_q(u)=X_u$ where \[X_u=
\left[ {\begin{array}{cc}
\pi_u^\infty(a) &-q\pi_u^\infty(c^*) \\
\pi_u^\infty(c) &\pi_u^\infty(a^*) \\
 \end{array} } \right]\,.
\] This map is easily seen to be continuous, then the desired conclusion is obtained.
\end{proof}

. It is very often mentioned that, for $0<q<\nu<1$, $SU_q(2)$  is ``less" commutative than $SU_\nu(2)$. We end this section with the following theorem, which gives a hint ---in terms of the infinite-dimensional representations introduced before--- to distinguish them in terms of algebraic properties of the parameters.

\begin{teo}
Let H be a separable Hilbert space on which \su  is represented. Consider the sub-algebra $\langle c\rangle _\mathbb{Q}$ of \su generated by the generating element $c$ with coefficients in the field $\mathbb{Q}+i \mathbb{Q}$.  Then we have that $\pi_z^\infty( \langle c\rangle _\mathbb{Q} )\subseteq Aut(H)$ for each  $z\in \mathbb{S}^1$ if and only if $q$ is a transcendental number.
 \end{teo}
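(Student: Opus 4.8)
The plan is to reduce the statement, through Woronowicz's explicit description of $\pi_z^\infty$ (Theorem \ref{Woro}), to an elementary fact about algebraic numbers. Since $\mathbb{Q}+i\mathbb{Q}=\mathbb{Q}(i)$ is a field and $c$ satisfies no polynomial relation in $SU_q(2)$ (for transcendental $w\in\mathbb{S}^1$ the representation $\pi_w^\infty$ sends every nonzero polynomial in $c$ to a nonzero, in fact injective, operator, by the computation below), every element of $\langle c\rangle_{\mathbb{Q}}$ has the form $f=P(c)$ with $P\in(\mathbb{Q}+i\mathbb{Q})[X]$. By Theorem \ref{Woro}, $\pi_z^\infty(c)\,e_n=q^{n}z\,e_n$, hence $\pi_z^\infty(f)\,e_n=P(q^{n}z)\,e_n$, so that $\pi_z^\infty(f)$ is a bounded operator, diagonal in the basis $(e_n)$, and it is injective (hence belongs to $\mathrm{Aut}(H)$) exactly when none of the scalars $P(q^{n}z)$ vanishes. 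Thus the assertion ``$\pi_z^\infty(\langle c\rangle_{\mathbb{Q}})\subseteq\mathrm{Aut}(H)$ for every $z\in\mathbb{S}^1$'' is equivalent to saying that no number $q^{n}z$, with $z\in\mathbb{S}^1$ and $n\geq 1$, is a root of a nonzero polynomial over $\mathbb{Q}+i\mathbb{Q}$.

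The core is then the following observation, which I would isolate as a short lemma. If $\alpha\in\mathbb{C}$ is algebraic, so is $|\alpha|$: indeed $\overline{\alpha}$ is algebraic, hence so is $|\alpha|^{2}=\alpha\overline{\alpha}$, and hence so is $|\alpha|=\sqrt{|\alpha|^{2}}$; conversely, if $q>0$ and $q^{n}$ is algebraic then $q$ is algebraic, being a root of $X^{n}-q^{n}$. Since $|q^{n}z|=q^{n}$ whenever $z\in\mathbb{S}^1$, it follows that for $n\geq 1$ the number $q^{n}z$ is algebraic over $\mathbb{Q}$ if and only if $q$ is; and, because $\mathbb{Q}(i)/\mathbb{Q}$ is algebraic, being a root of a nonzero polynomial over $\mathbb{Q}(i)$ coincides with being algebraic over $\mathbb{Q}$. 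This at once yields the implication ``$q$ transcendental $\Rightarrow$ the property holds'': if $q$ is transcendental then every $q^{n}z$ with $z\in\mathbb{S}^1$, $n\geq 1$, is transcendental, so $P(q^{n}z)\neq 0$ for every nonzero $P\in(\mathbb{Q}+i\mathbb{Q})[X]$, and each $\pi_z^\infty(f)$ is injective.

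For the converse I would argue by contraposition. If $q$ is algebraic, choose a nonzero $m\in(\mathbb{Q}+i\mathbb{Q})[X]$ with $m(q)=0$ (the minimal polynomial of $q$ over $\mathbb{Q}(i)$ serves, multiplied by $X$ if a zero constant term is desired). Then $m(c)$ is a nonzero element of $\langle c\rangle_{\mathbb{Q}}$, nonzero because $\pi_w^\infty(m(c))\neq 0$ for transcendental $w$ as above, whereas $\pi_1^\infty(m(c))\,e_1=m(q)\,e_1=0$ shows that $\pi_1^\infty(m(c))$ has nontrivial kernel and therefore is not in $\mathrm{Aut}(H)$. Thus already $\pi_1^\infty(\langle c\rangle_{\mathbb{Q}})\not\subseteq\mathrm{Aut}(H)$, and the property fails. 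Together with the previous step, this establishes the equivalence.

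The one substantive ingredient is the fact that the modulus of an algebraic number is algebraic, together with the elementary fact that a positive real is algebraic if and only if its $n$-th power is; the rest is the explicit diagonalization supplied by Theorem \ref{Woro} and the identification of $\langle c\rangle_{\mathbb{Q}}$ with a polynomial algebra in $c$. The single delicate point is to fix precisely what $\mathrm{Aut}(H)$ means and to confirm that, for the diagonal operators $\pi_z^\infty(P(c))$ appearing here, the only obstruction to lying in $\mathrm{Aut}(H)$ is a vanishing diagonal entry; granting that, the argument above is the complete proof.
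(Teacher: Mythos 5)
Your reverse direction is, in substance, the paper's entire proof: the paper takes an irreducible polynomial $P$ with $P(q)=0$ and observes that $e_1$ lies in the kernel of $\pi_u^\infty(P(c))$, exactly as you do. You are in fact more careful than the paper on two points: you specialize to $z=1$, which is necessary since $\pi_z^\infty(P(c))e_1=P(qz)e_1$ and $P(qz)=0$ is only guaranteed when $qz$ is itself a root of $P$; and the paper's concluding line reads ``$\pi_u^\infty(\langle c\rangle_{\mathbb Q})\subseteq Aut(H)$'' where $\not\subseteq$ is clearly intended. The paper stops there: the forward implication (transcendental $\Rightarrow$ containment) is not argued at all, so that half of your proposal is new material rather than a variant of the paper's argument. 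Your modulus lemma (the modulus of an algebraic number is algebraic, hence $q^nz$ is transcendental for $n\geq 1$ whenever $q$ is) is correct and is the right kind of ingredient for that direction.

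That forward half, however, has two concrete gaps. First, the index set of the basis in theorem \ref{Woro} includes $n=0$ (this is why the convention $e_{-1}=0$ is stated, and the explicit model on $L^2(\mathbb S^1)$ has $e_0$ equal to the constant function), so the diagonal entries of $\pi_z^\infty(P(c))$ include $P(q^0z)=P(z)$; since $z\in\mathbb S^1$ may be algebraic, this entry can vanish independently of $q$ --- e.g. $P(X)=X^2-X$ and $z=1$ give $\pi_1^\infty(c^2-c)e_0=0$ --- and your lemma, which covers only $n\geq 1$, does not exclude this. Second, injectivity of a diagonal operator is not membership in $Aut(H)$ in its usual sense of invertibility in $B(H)$: the entries $P(q^nz)$ tend to $P(0)$, so whenever $P(0)=0$ (already for $P(X)=X$, i.e. for $c$ itself) the operator is injective with dense range but not surjective, hence not invertible. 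You flag this second point as ``the single delicate point,'' but it is not merely delicate: under the standard reading of $Aut(H)$ the forward implication fails outright, and under the reading ``injective'' the $n=0$ obstruction remains. So your proof of the direction the paper actually proves is correct and coincides with the paper's; the direction you added cannot be closed as written without renegotiating what $Aut(H)$ means and how the basis is indexed.
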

  
 \begin{proof}
 Suppose $q\in (0,1)$  is an algebraic number, then there exists an irreducible polynomial with rational coefficients $P(x)=r_0+r_1x+...+r_nx^n$ such that $P(q)=0.$ It is clear that the element $e_1\in H$ in the ordered basis for the Hilbert space H is in the kernel of the operator $\pi_u^\infty(r_0+r_1c+...+r_{n}c^n)$, thus 
 $\pi_u^\infty( \langle c\rangle _\mathbb{Q} )\subseteq Aut(H)$.
 \end{proof}

\section{Global Pseudo-differential Operators on $SU_q(2)$}\label{S:PDOsSUq(2)} 
In this section we define a global pseudo-differential calculus for the quantum group $SU_q(2)$ on the lines of \cite{RSM}. We will consider the Fourier order of the corresponding symbols and provide  a composition formula for the algebra of global pseudo-differential operators graded with respect to such Fourier order.\\ \\
In \cite{RSM}, the authors consider the quantum group $SU_q(2)$ in the spirit of Noncommutative Geometry \cite{Connes 2}, where the spectral geometry of a noncommutative algebra is studied by  means of a reference operator (a Dirac-type operator). In particular, from a summable operator $D_\mathcal{A} : L^2(\mathcal{A}) \to L^2(\mathcal{A}) $ defined by a sequence
of eigenvalues according to the Peter-Weyl decomposition of the quantum group $\mathcal{A}$, and subject to a summability
condition, they define a smooth domain $C^\infty_{D_\mathcal{A}}:=\bigcap_\alpha Dom(|D_\mathcal{A}|^\alpha)$   in terms of which they obtain a``bare" spectral triple 
$(C^\infty_{D_\mathcal{A}} , L^2(\mathcal{A}), D_\mathcal{A}) $. The notion of full symbol of the compact Lie group case \cite{R.Tbook} goes through and, based on the Fourier theory for compact quantum groups, the following notion of global pseudo-differential operator is given in the case of $\mathcal{A}=SU_q(2)$.

\begin{defi}
A linear continuous operator $A:C^\infty_D\rightarrow [C^\infty_D]^{*}$ is called a pseudo differential operator. If the corresponding Schwartz kernel $K_A$ satisfies in particular that $K_A\in C^\infty_D\hat{\otimes}C^\infty_D$ then the pseudo-differential operator is called regular.
\end{defi}

The statement of the theorem 6.14 in \cite{RSM} is exactly the definition of pseudo-differential operator in the present document. The motivation of the authors of \cite{RSM}, as well as the motivation of this work, is to adopt the definition from the theory exposed in  chapter \cite{R.Tbook} for the case of compact Lie groups.

\begin{defi} A {\em global pseudo-differential operator} on  $SU_q(2)$ is a linear operator that can be written in the form 
$$T_\sigma f=\sum_{l\in \frac{1}{2}\mathbb{N}}[2l+1]_qTr(\sigma(l)\hat{f}(T^{l})T^{l}),$$ where  the function $\sigma: \frac{1}{2}\mathbb{N} \rightarrow \bigcup_{l} M_{2l+1\times 2l+1}(\mathbb{C})\otimes SU_q(2)$ is the {\em symbol} of the operator. Equivalently, a pseudo-differential operator on $SU_q(2)$ can be written in the form $T_\sigma f= \mathfrak{F}^{-1}(\sigma \mathfrak{F}(f) )$ where  $\mathfrak{F}$ and $\mathfrak{F}^{-1}$ are the Fourier and its inverse operators.
\end{defi}

Notice that, if  we consider a symbol $\sigma$ with entries in the algebra $SU_q(2)$, then we can think that $\sigma=\sigma(x,l)$ depends on the ``space" variable $X_x\in SU^H_q(2)$. In this sense, this definition extends in a formal way the concept of a global pseudo-differential operators on the compact Lie group $SU(2)$.
\\�\\
We start our analysis of the properties of the pseudo-differential operators on \su \, with the following Lemma, which establishes a necessary condition for a linear operator to be a pseudo-differential operator. We want to recall that  the set given by $\{t_{ij}^l: l\in\mathbb{N} \, \; {\rm and } \,\; -l\leq i,j\leq l\}$ is the basis provided by the irreducible co-representations introduced in the section \ref{S:FAonSUq(2)} before. The following important lemma is obtained after considering 
the form of the Fourier transform of the elements $t_{ij}^l$.
\begin{lema}\label{Key}
Let  $\sigma $ be a symbol and $T_\sigma$ be its corresponding global pseudo-differential operator acting on \su. Then $$T_\sigma (t_{ij}^{l})=\displaystyle\sum_{k=0}^{2l} \sigma_{l-k, j}(l)t_{i,k-l}^l,$$
where $\sigma_{l-k, j}$ denotes the corresponding entry in the matrix symbol $\sigma$.
\end{lema}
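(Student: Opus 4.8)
The plan is to unwind the definition of $T_\sigma$ on a single basis element $t_{ij}^l$, using the explicit form of the Fourier transform computed in the proof of Plancherel's identity. First I would recall from Section~\ref{S:FAonSUq(2)} that for the basis element $f=t_{ij}^l$ one has
$$\big(\widehat{t_{ij}^l}(r)\big)_{wt}=h\big(t_{ij}^l(t_{tw}^r)^*\big)=\delta_{lr}\,\delta_{it}\,\delta_{jw}\,[2l+1]_q^{-1}q^{-2j},$$
so that $\widehat{t_{ij}^l}(T^r)$ is the zero matrix unless $r=l$, and $\widehat{t_{ij}^l}(T^l)$ is the $(2l+1)\times(2l+1)$ matrix whose only nonzero entry is in position $(i,j)$, with value $[2l+1]_q^{-1}q^{-2j}$. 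This collapses the sum over $l\in\frac12\mathbb{N}$ in the definition of $T_\sigma$ to the single term indexed by $l$.

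Next I would substitute this into $T_\sigma f=\sum_{l}[2l+1]_q\,\mathrm{Tr}\big(\sigma(l)\widehat{f}(T^l)T^l\big)$. The surviving term is $[2l+1]_q\,\mathrm{Tr}\big(\sigma(l)\,\widehat{t_{ij}^l}(T^l)\,T^l\big)$. Since $\widehat{t_{ij}^l}(T^l)$ has the single entry $[2l+1]_q^{-1}q^{-2j}$ at position $(i,j)$, the matrix product $\sigma(l)\,\widehat{t_{ij}^l}(T^l)$ has, in its $j$-th column, the entries $[2l+1]_q^{-1}q^{-2j}\,\sigma(l)_{p,i}$ for $-l\le p\le l$, and zeros elsewhere. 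Here one must be careful about the indexing convention: the rows and columns of the $(2l+1)\times(2l+1)$ matrices are labelled by $-l,\dots,l$, and in the statement the entry $\sigma_{l-k,j}(l)$ corresponds to setting $k$ so that the row index runs over $0,\dots,2l$ with $l-k$ ranging over $l,\dots,-l$; I would make this reindexing explicit. Multiplying further by $T^l=[t_{ij}^l]$ and taking the (operator-valued) trace, the coefficient of a generic entry $t_{i',k'}^l$ is extracted by picking out the appropriate diagonal contribution, yielding $T_\sigma(t_{ij}^l)=\sum_{p=-l}^{l}\sigma(l)_{p,j}\,t_{i,p}^l$, which after the substitution $p=k-l$, $k=0,\dots,2l$ is exactly the claimed formula $\sum_{k=0}^{2l}\sigma_{l-k,j}(l)\,t_{i,k-l}^l$ — modulo checking that the $q$-weight $q^{-2j}$ from $\widehat{t_{ij}^l}(T^l)$ cancels correctly against the ordinary (non-$q$-deformed) trace $\mathrm{Tr}$ appearing in the definition of the global pseudo-differential operator.

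The main obstacle I anticipate is bookkeeping rather than conceptual: keeping the index ranges $\{-l,\dots,l\}$ versus $\{0,\dots,2l\}$ consistent, tracking which factor of $[2l+1]_q$ and $q^{-2j}$ comes from which place, and verifying that the definition uses the ordinary trace $\mathrm{Tr}$ (not $\mathrm{Tr}_q$) so that no extra $D_q$ weight intervenes. A clean way to organize the computation is to first prove the identity for the symbol $\sigma$ equal to a single ``matrix unit'' concentrated at one pair of indices (with a fixed level $l$), where everything is transparent, and then extend to general $\sigma$ by linearity in the symbol; the right-hand side of the claimed formula is manifestly linear in the entries $\sigma_{l-k,j}(l)$, so this reduction is legitimate. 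Once the matrix-unit case is checked, the general statement follows immediately.
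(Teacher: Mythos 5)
Your proposal is essentially the paper's own argument: the paper gives no written proof of Lemma~\ref{Key}, saying only that it ``is obtained after considering the form of the Fourier transform of the elements $t_{ij}^l$,'' and your computation --- substituting $\big(\widehat{t_{ij}^l}(r)\big)_{wt}=\delta_{lr}\delta_{it}\delta_{jw}[2l+1]_q^{-1}q^{-2j}$ into the definition of $T_\sigma$ so that the sum collapses to the single level $l$ and the trace picks out one column of $\sigma(l)$ --- is exactly that intended calculation. You are also right to flag the residual $q$-weight: with the definition as literally written (plain $\mathrm{Tr}$ rather than $\mathrm{Tr}_q$, and $h(t_{ij}^l(t_{ij}^l)^*)=[2l+1]_q^{-1}q^{\pm 2j}$), a factor $q^{\pm 2j}$ survives unless the $D_q$-weight is reinstated, which is an inconsistency in the paper's normalizations rather than a gap in your argument.
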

 As corollary we obtain 
 \begin{coro}
 Let  $\sigma $ be a symbol with  complex entries,  and let $T_\sigma$ be its corresponding pseudo-differential operator acting on \su. Then $$\|T_\sigma(f)\|^2=\sum_{l}\displaystyle\sum_{k=0}^{2l} (\sigma_{l-k, j})^2 [2l+1]_q^{-1}q^{-2(k-l)}(f_{i, k-l}^l)^2,$$
 where $f=\sum f_{ij}^l t_{ij}^l\in SU_q(2)$, for $ f_{ij}^l \in \mathbb{C}$, and the norm is the defined in terms of the Haar functional as $\|f\|^2:=h (ff^*)$ for all $f\in SU_q(2)$.
 \end {coro}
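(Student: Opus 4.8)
The plan is to compute $\|T_\sigma(f)\|^2$ directly by expanding $f$ in the Peter--Weyl basis, applying Lemma~\ref{Key} termwise, and then invoking the Plancherel-type orthogonality relation $h(t_{ij}^{(l)}(t_{ij}^{(l)})^*) = [2l+1]_q^{-1} q^{2j}$ recalled in Section~\ref{S:FAonSUq(2)}. First I would write $f = \sum_{l} \sum_{i,j} f_{ij}^l t_{ij}^l$ and note that, since the symbol $\sigma$ has complex (scalar) entries and $T_\sigma$ is linear, Lemma~\ref{Key} gives
$$T_\sigma(f) = \sum_{l}\sum_{i,j} f_{ij}^l \sum_{k=0}^{2l} \sigma_{l-k,j}(l)\, t_{i,k-l}^l.$$
Next I would compute $h\big(T_\sigma(f)\,T_\sigma(f)^*\big)$ by multiplying out the double sum. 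Here the orthogonality of the basis $\{t_{ij}^l\}$ with respect to the inner product $\langle y,x\rangle = h(xy^*)$ collapses most cross terms: only matching indices $l$, $i$, and $k-l$ survive, leaving a single sum over $l$ and $k$ (with $i,j$ fixed by the indexing convention suggested in the statement).

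The key computational step is then that for a surviving term $t_{i,k-l}^l$, the factor $h\big(t_{i,k-l}^l (t_{i,k-l}^l)^*\big)$ equals $[2l+1]_q^{-1} q^{2(k-l)}$ by the stated formula. I would reconcile this with the $q^{-2(k-l)}$ appearing in the claimed identity by being careful about which of $q^{2j}$ versus $q^{-2j}$ is the correct normalization in the author's conventions — the excerpt uses $D_q = \mathrm{Diag}(q^{-2},\dots,q^{-2n})$ in the Fourier inversion formula, and in the proof of Theorem~1.2 writes $h(t_{ij}^l (t_{tw}^r)^*) = \delta\cdots [2l+1]_q^{-1} q^{-2j}$, so the sign in the exponent follows the latter convention; the stated corollary with $q^{-2(k-l)}$ is then consistent. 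Assembling these pieces yields
$$\|T_\sigma(f)\|^2 = \sum_{l}\sum_{k=0}^{2l} (\sigma_{l-k,j})^2\, [2l+1]_q^{-1} q^{-2(k-l)} (f_{i,k-l}^l)^2,$$
which is the assertion.

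The main obstacle — really the only subtle point — is bookkeeping with the indices $i$ and $j$, which appear free in the statement of the corollary. Strictly speaking the formula as written presupposes that $f$ is supported on a single ``column/row'' pattern (i.e., that the sum over $i,j$ has already been specialized), or that $(\sigma_{l-k,j})^2$ and $(f_{i,k-l}^l)^2$ are shorthand for the appropriate double sums; I would state explicitly that the identity holds coefficientwise, so that the general case follows by summing over $i$ and $j$ and using that the cross terms vanish by orthogonality. A second minor point is ensuring the interchange of the (possibly infinite) sum over $l$ with the application of $h$ is legitimate: this is justified because $f\in SU_q(2)$ lies in the GNS completion and $T_\sigma f$ is again a well-defined element by the Fourier inversion formula, so Plancherel's identity (Theorem~1.2) applies directly to $T_\sigma f$ and reduces the whole computation to evaluating $\widehat{T_\sigma f}$, which Lemma~\ref{Key} already encodes.
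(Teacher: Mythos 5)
Your route is essentially the paper's: the paper applies Plancherel's identity to $T_\sigma f$ and then reads off the Fourier coefficients $\widehat{T_\sigma f}(l)$ from Lemma~\ref{Key}, which amounts to the same computation as your direct expansion of $h\bigl(T_\sigma f\,(T_\sigma f)^*\bigr)$ by orthogonality of the $t_{ij}^l$. For $f$ supported on a single pair $(i,j)$ at each level $l$ the computation you describe is correct (up to writing $|\sigma_{l-k,j}|^2$ and $|f_{ij}^l|^2$ rather than naked squares, since the entries are complex), and your remark about the $q^{2j}$ versus $q^{-2j}$ normalization is a fair reading of the paper's own inconsistency.

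The genuine gap is in your final reduction to general $f$: you assert that after summing over $i$ and $j$ ``the cross terms vanish by orthogonality.'' That is true for the $l$- and $i$-indices but false for $j$. By Lemma~\ref{Key}, $T_\sigma(t_{ij}^l)$ and $T_\sigma(t_{ij'}^l)$ with $j\neq j'$ both lie in $\mathrm{span}\{t_{i,k-l}^l : 0\leq k\leq 2l\}$ --- the operator discards the second index of the input and remembers $j$ only through the coefficient $\sigma_{l-k,j}(l)$ --- so their images are in general \emph{not} orthogonal. The coefficient of $t_{i,k-l}^l$ in $T_\sigma f$ is $\sum_j f_{ij}^l\,\sigma_{l-k,j}(l)$, and the identity one actually obtains is
$$\|T_\sigma f\|^2=\sum_l\sum_{i}\sum_{k=0}^{2l}\Bigl|\sum_j f_{ij}^l\,\sigma_{l-k,j}(l)\Bigr|^2\,[2l+1]_q^{-1}q^{-2(k-l)},$$
which collapses to a sum of products $|\sigma_{l-k,j}|^2|f_{ij}^l|^2$ only when $f$ has a single nonzero $j$-entry per level. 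To be fair, this defect is already present in the corollary as stated (note the dangling free indices $i,j$ and the coefficient $f_{i,k-l}^l$ where $f_{ij}^l$ should appear) and in the paper's own one-line proof, which writes $[\widehat{T_\sigma(f)}(l)]_{k-l,i}=\sigma_{l-k,j}f_{i,k-l}^l[2l+1]_q^{-1}q^{-2(k-l)}$ with the same unexplained $j$; but the vanishing-of-cross-terms step is precisely where your justification fails, and it should be replaced by the displayed formula or by an explicit restriction on the support of $f$.
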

 \begin{proof}
 Let $\sigma $ be a symbol satisfying the conditions of the theorem. Take $f\in SU_q(2),$ then $f=\sum f_{ij}^l t_{ij}^l$ and  we have that, by Plancherel's Identity,
 $$\|T_\sigma(f)\|^2=\sum_{l}[2l+1]_qTr\left(\widehat{T_\sigma(f)}(l) \widehat{T_\sigma(f)}^{*}(l)\right).$$
 We can see that $[\widehat{T_\sigma(f)}(l)]_{k-l,i}=\sigma_{l-k,j}f_{i,k-l}^{l}[2l+1]_{q}^{-1}q^{-2(k-l)}$, thus  $$\|T_\sigma(f)\|^2=\sum_{l}\displaystyle\sum_{k=0}^{2l} (\sigma_{l-k, j})^2 [2l+1]_q^{-1}q^{-2(k-l)}(f_{i, k-l}^l)^2.$$
 \end{proof}

\begin{rem}
Let $A: SU_q(2)\rightarrow SU_q(2) $ be a global pseudo-differential operator, then   $A$ is invariant with respect to the regular representation defined before  in the following sense: 
$$A(\phi_v f)(u):=\phi_v(Af)(u)$$ where $u,v\in \mathbb{S}^1,$ and $f\in SU_q(2)$. Here  $\phi_v f(u):=f(uv)$ as defined in the proposition \ref{A2}.
\end{rem}

There is a notion of {\em order} for global pseudo-differential operators on compact Lie groups which is perfectly adapted to compare symbol/operator classes with the corresponding symbol/operator classes in the local pseudo-differential calculus of H{\"o}rmander. It is also used, among many other things, to give conditions on the symbols for those operators to be bounded in Sobolev spaces (see theorems 10.8.1 and  10.9.6 in \cite{R.Tbook}). 
We introduce a different notion of order, that of {\em Fourier order} and {\em Fourier classes} of symbols for global pseudo-differential operators on $SU_q(2)$. 

\begin{defi}
A function $$\sigma: \frac{1}{2} \mathbb{N}\rightarrow \bigcup_{l\in \frac{1}{2} \mathbb{N}} M_{2l+1\times 2l+1}(\mathbb{C})\otimes SU_q(2)$$ will be called a  {\em homogeneous  symbol of Fourier order} $m\in \frac{1}{2} \mathbb{N}$ if  the following conditions are satisfied:
\begin{itemize}
\item[i.] For each $l \in \frac{1}{2} \mathbb{N}$ there exists a map  $\psi_{\sigma}(l): I_{2l+1}\times I_{2l+1}\rightarrow I_{2l+1}\times I_{2l+1}$ where $I_{2l+1}:=\{\-l,-l+1,..., l\}$, and  the entries of $\sigma$ satisfy  $\sigma(x,l)_{i,j}\in \textsf{Span}\{t_{\psi_\sigma(l)(i,j)}^m\}$.
\item[ii.]If $\psi_\sigma(l)(i,j)\notin  I_{2m+1}\times I_{2m+1}$ then $\sigma(x,l)_{i,j}=0$.
\end{itemize}
\end{defi}
Now, if  $\sigma=\sum c_k\sigma_k$ with $\sigma_k$ homogeneous of Fourier order $m$ and $c_k\in \mathbb{C}$, then we say that $\sigma $ is a  symbol of Fourier order $m\in \frac{1}{2} \mathbb{N}$. The class of symbols of order $m\in \frac{1}{2} \mathbb{N}$ is denoted by $\Phi^m(SU_q(2))$. 
We will say that $\sigma$ is a {\em homogeneous symbol of negative Fourier order} $-m$, for $m\in \frac{1}{2} \mathbb{N}$,  if there exists a homogeneous symbol $\beta \in \Phi^m(SU_q(2))$ such that  $T_\beta \circ T_\sigma= T_\sigma \circ T_\beta=Id$. 
If  $\sigma=\sum c_k\sigma_k$ with $\sigma_k$ homogeneous of Fourier order $-m$ and $c_k\in \mathbb{C}$, then we say that $\sigma $ is a  symbol of Fourier order $-m$. The class of symbols of order $-m$ is denoted by $\Phi^{-m}(SU_q(2))$. 
We say that an operator has Fourier order $m\in \frac{1}{2}\mathbb{Z}$  if the corresponding symbol  belongs to the class  $\Phi^m(SU_q(2))$.  
Finally, the principal symbol of $\sigma \in \Phi^m(SU_q(2))$ is the sum of the homogeneous symbols of higher Fourier order in the decomposition of $\sigma$ into homogeneous symbols. \\�\\
In order to see that the symbol classes $\Phi^m(SU_q(2))$, $m\in \frac{1}{2}\mathbb{Z}$ form a graded algebra  we will prove a composition formula for operators with symbols of Fourier order zero  first and later the corresponding result for operators of positive Fourier order.

\begin{teo}[Composition Formula I]
Let  $A$ and $B$ be pseudo-differential operators on $SU_q(2)$ with symbols  $\sigma_A\in \Phi^0(SU_q(2)) $, for  and $\sigma_B\in \Phi^0(SU_q(2))$ respectively.  Then  the  composition operator $A\circ B$ is a pseudo-differential operator whose symbol is the product of the symbols, i.e.  $\sigma_{A \circ B}(l)=\sigma_A(l) \sigma_B(l)$ for all $l\in \frac{1}{2}\mathbb{N}$.
\end{teo}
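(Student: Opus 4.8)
The plan is to evaluate both operators on the Peter--Weyl basis $\{t_{ij}^l\}$ and to apply Lemma~\ref{Key} twice. A preliminary observation makes the hypothesis $\sigma_A,\sigma_B\in\Phi^0(SU_q(2))$ very concrete: a symbol of Fourier order $0$ has scalar entries. Indeed its entries lie in the span of $t^0_{\psi_\sigma(l)(i,j)}$, and since $I_1=\{0\}$ condition (ii) of the definition forces $\psi_\sigma(l)(i,j)=(0,0)$ wherever the entry is nonzero, while $t^0_{0,0}$ is the group-like entry of the trivial corepresentation, i.e. the unit $1\in SU_q(2)$. Hence $\sigma_A(l)$ and $\sigma_B(l)$ are genuine complex $(2l+1)\times(2l+1)$ matrices; in particular $\sigma_A(l)\sigma_B(l)$ is again such a matrix and so defines a symbol in $\Phi^0(SU_q(2))$. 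It therefore suffices to prove the operator identity $A\circ B=T_{\sigma_A\sigma_B}$.

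First I would apply Lemma~\ref{Key} to obtain $B(t_{ij}^l)=\sum_{k=0}^{2l}(\sigma_B)_{l-k,j}(l)\,t^l_{i,k-l}$. Since the coefficients $(\sigma_B)_{l-k,j}(l)$ are scalars, they pull out of the linear map $A$, and a second application of Lemma~\ref{Key} to each $t^l_{i,k-l}$ gives
\[ (A\circ B)(t_{ij}^l)=\sum_{k'=0}^{2l}\Bigl(\sum_{k=0}^{2l}(\sigma_A)_{l-k',\,k-l}(l)\,(\sigma_B)_{l-k,\,j}(l)\Bigr)\,t^l_{i,k'-l}. \]
Reading the summation variable $k$ as the contraction index running over $I_{2l+1}=\{-l,\dots,l\}$, the inner sum is exactly the $(l-k',j)$-entry of the matrix product $\sigma_A(l)\sigma_B(l)$, so $(A\circ B)(t_{ij}^l)=\sum_{k'=0}^{2l}\bigl(\sigma_A(l)\sigma_B(l)\bigr)_{l-k',j}\,t^l_{i,k'-l}$, which by Lemma~\ref{Key} once more is precisely $T_{\sigma_A\sigma_B}(t_{ij}^l)$.

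Because $T_\sigma$ keeps $t_{ij}^l$ inside the finite-dimensional block spanned by $\{t^l_{i,m}:-l\le m\le l\}$ (same upper index $l$, same first lower index $i$), the action on the basis determines the operator on all of the underlying algebra; extending by linearity --- and by continuity, using the Plancherel-type norm estimate in the Corollary to Lemma~\ref{Key} to see that these operators are bounded --- we conclude $A\circ B=T_{\sigma_A\sigma_B}$ on $SU_q(2)$, i.e. $\sigma_{A\circ B}(l)=\sigma_A(l)\sigma_B(l)$ for every $l\in\frac12\mathbb{N}$, and in particular $A\circ B$ is again a pseudo-differential operator.

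I expect the only real difficulty to be the index bookkeeping in the middle step: one must check that the relabelling $k\mapsto k-l$ identifying $\{0,\dots,2l\}$ with $\{-l,\dots,l\}$ is applied consistently to the two factors produced by the two invocations of Lemma~\ref{Key}, so that the resulting double sum is genuinely the matrix product $\sigma_A(l)\sigma_B(l)$ in that order, and not a transpose or an index-reversed product. Once the index conventions of Lemma~\ref{Key} are pinned down, this is a one-line verification and the rest of the argument is purely formal.
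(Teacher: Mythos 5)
Your proof is correct and follows essentially the same route as the paper's: both hinge on the observation that a Fourier-order-zero symbol has scalar (complex) entries and on Lemma~\ref{Key} applied to the basis elements $t_{ij}^l$. The paper compresses the second step into the single identity $\widehat{B(t_{ij}^l)}=\sigma_B\,\widehat{t_{ij}^l}$ and reads the product of symbols off the quantization formula, whereas you carry out the double sum explicitly; the index-matching caveat you raise is genuine (the pairing $\sigma_{l-k,j}\leftrightarrow t^l_{i,k-l}$ in Lemma~\ref{Key} must be read with a consistent convention for it to yield the matrix product in the stated order) but is a matter of bookkeeping, not of substance.
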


\begin{proof}
Let  $\sigma_A\in \Phi^0(SU_q(2)) $ and $\sigma_B\in \Phi^0(SU_q(2))$. Then all the entries of the symbol $\beta$ are complex numbers and, taking into account the lemma \ref{Key}, $\widehat{B(t_{ij}^l)}=\sigma_B \widehat{t_{ij}^l}$. Thus $$A\circ B (t_{ij}^r)=\sum_l[2l+1]_qTr(\sigma_A \sigma_B\widehat{t_{ij}^r}(l)T^l),$$ implying that $\sigma_{A \circ B}(l)=\sigma_A(l) \sigma_B(l)$ for all $l\in \frac{1}{2}\mathbb{N}$. 
\end{proof}

\begin{rem}Notice that the theorem above is also true for $\sigma_A\in \Phi^m(SU_q(2)) $ for $m\in \frac{1}{2}\mathbb{N}$.
\end{rem}

As another consequence of the lemma \ref{Key}, we have a formula relating the symbol of an operator of Fourier order zero with the operator action on the basis elements $t_{ij}^l$.
Actually, the symbol of a pseudo-differential  operator {\em A} of Fourier order zero satisfies that 
$$\sigma_{l-k,j}(l)=[2l+1]_q q^{-2(k-l)}h(T_\sigma(t_{ij}^l)(t_{i,k-l}^l )^{*})),$$
where $0\leq k\leq 2l$.

\begin{exa}
Many of the important linear operators already defined in the study of the quantum group $SU_q(2)$ are global pseudo-differential operators. Indeed, for a linear operator $A$  on $SU_q(2)$ such that $A(t_{ij}^l)=\lambda(l)t_{ij}^l$, where $\lambda$ is a complex valued function, then $A$ is a global  pseudo-differential operator with diagonal symbol $\sigma_A(l)=\lambda(l)I_{2l+1, 2l+1}\in \Phi^0(SU_q(2).$ In consequence,  for the particular cases of $\lambda(l)=2l+1,$ or $\lambda(l)=[l]_q[l+1]_q$, the true and naive Dirac Operators defined in \cite{VS} , \cite{Connes} and \cite{Goswami} are also  global pseudo-differential operators of Fourier order zero. 
\end{exa}

Recall that $$t^m_{rs}t^n_{ij}=\sum_{|n-m|\leq p\leq n+m}C(n,m,p ;r,s,i,j)t^p_{i+r,j+s},$$
defines the {\em Clebsch-Gordan coefficients}, and the matrix of these coefficients is invertible, see \cite{KS}. 
Let $\sigma \in \Phi^{m}(SU_q(2))$  be a homogeneous symbol and $\beta \in \Phi^{n}(SU_q(2))$ be any symbol. Since $\sigma$ is homogeneous, by definition,  r each $l \in \frac{1}{2} \mathbb{N}$ there exists a map  $\psi_{\sigma}(l): I_{2l+1}\times I_{2l+1}\to I_{2l+1}\times I_{2l+1}$ where $I_{2l+1}:=\{\-l,-l+1,..., l-1,  l\}$, and  the entries of $\sigma$ satisfy that $\sigma(x,l)_{i,j}\in \textsf{Span}\{t_{\psi_\sigma(l)(i,j)}^m\}$. In addition, if $\psi_\sigma(l)(i,j)\notin  I_{2m+1}\times I_{2m+1}$ then $\sigma(x,l)_{i,j}=0$, where we write $\psi_\sigma:=(\psi_{\sigma}^1  ,\psi_{\sigma}^2)$. In order to know the entries of the symbol of the composition operator $T_\gamma= T_\beta\circ T_\sigma $ it is enough to compute the image of the basis elements. We have, using lemma \ref{Key}, that
  $$T_\gamma(t_{i,j}^{l})=[2l+1]_qT_\beta( \sum_{0\leq k\leq 2l}\sigma(l)_{l-k,j}t_{i,k-l}^{l})$$ $$=[2l+1]_q\sum_{0\leq k\leq 2l} \sum_{|l-m|\leq p\leq l+m}C_I \left(   \sum_{0\leq d \leq 2p} \beta(p)_{p-d, \psi_{\sigma}^2(l-k,j)+k-l} \,\,\,t_{\psi_{\sigma}^1(l-k,j)+i,d-p}^p\right) ,$$ where $C_I:=C_I(l,m,p;\psi_{\sigma}^1(l-k,j), \psi_{\sigma}^2(l-k,j))$ are the Clebsch-Gordan coefficients.
One can see that the principal symbol appears for $p=l+m.$ To compute this principal symbol we proceed in the following way:  we put $p=l+m$ in the above series and we obtain
 $$[2l+1]_q\sum_{0\leq k\leq 2l}C_I\left( \sum_{0\leq d \leq 2(l+m)} \beta(l+m)_{l+m-d,\, \psi_{\sigma}^2(l-k,j)+k-l}\,\,\, t_{\psi_{\sigma}^1(l-k,j)+i,\,d-l-m}^{l+m}\right).$$
Now taking into account the definition of {Clebsch-Gordan} coefficients \cite{V}, we can decompose the last element as follows
 $$ t_{\psi_{\sigma}^1(l-k,j)+i,\,\,d-l-m}^{l+m}=\sum_{u+s=d-l-m}C_Jt_{\psi_{\sigma}^1(l-k,j),u}^{m}\,\,\,t_{i,s}^l,$$
 where $C_J$ depends on the Clebsch-Gordan coefficients. From the computations above we can see that the principal symbol is given by
 $$\frac{1}{[2l+1]_q}\gamma(l)_{-s,j}=$$ $$\sum_{0\leq k\leq 2l}C_I  \sum_{0\leq d \leq 2(l+m)} \beta(l+m)_{l+m-d, \psi_{\sigma}^2(l-k,j)+k-l} \,\,C_Jt_{\psi_{\sigma}^1(l-k,j),d-l-m-s}^{m}.$$
Thus, we have proved 
 
 \begin{teo}\label{Composition Formula II}
Let $\sigma \in \Phi^{m}(SU_q(2))$ be a homogeneous symbol and $\beta \in \Phi^{n}(SU_q(2))$ be any symbol.
Then the principal symbol $\gamma $ of the pseudo-differential operator 
$T_\sigma \circ T_\beta$ satisfies that:
$$\frac{1}{[2l+1]_q}\gamma(l)_{-s,j}=$$ $$\sum_{0\leq k\leq 2l}C_I  \sum_{0\leq d \leq 2(l+m)} \beta(l+m)_{l+m-d, \psi_{\sigma}^2(l-k,j)+k-l} \,\,C_Jt_{\psi_{\sigma}^1(l-k,j),d-l-m-s}^{m},$$
where $\psi_\sigma(l)(i,j):=(\psi_1(i,j),\psi_2(i,j))$.
\end{teo}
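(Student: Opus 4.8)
The plan is to obtain the stated formula by a direct computation: apply Lemma \ref{Key} twice, once for each of the two operators, and then reorganize the resulting sums so that the "space" part — the product of two entries $t^m_{\cdot\cdot}$ and $t^l_{\cdot\cdot}$ — is re-expanded via the Clebsch--Gordan decomposition, tracking only those terms that contribute to the highest Fourier order. Concretely, first I would write $T_\beta\circ T_\sigma$ applied to a basis element $t^l_{ij}$; since $\sigma$ is homogeneous of Fourier order $m$, Lemma \ref{Key} expresses $T_\sigma(t^l_{ij})$ as $\sum_{0\le k\le 2l}\sigma(l)_{l-k,j}\,t^l_{i,k-l}$, with each $\sigma(l)_{l-k,j}\in\textsf{Span}\{t^m_{\psi_\sigma(l)(l-k,j)}\}$.

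Next I would apply $T_\beta$ to this expression. Because the entries of $\sigma$ are themselves spanned by matrix coefficients $t^m_{rs}$, I would use the Clebsch--Gordan product rule $t^m_{rs}t^n_{ij}=\sum_{|n-m|\le p\le n+m}C(n,m,p;r,s,i,j)\,t^p_{i+r,j+s}$ to rewrite each term $\sigma(l)_{l-k,j}\,t^l_{i,k-l}$ as a sum over $p$ with $|l-m|\le p\le l+m$ of terms proportional to $t^p_{\psi^1_\sigma(l-k,j)+i,\,\psi^2_\sigma(l-k,j)+k-l}$. Applying Lemma \ref{Key} once more — now for $\beta$ acting on each such $t^p_{\cdot\cdot}$ — introduces a further sum over $0\le d\le 2p$ with coefficient $\beta(p)_{p-d,\,\psi^2_\sigma(l-k,j)+k-l}$. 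This yields the double-sum expression displayed just before the theorem. The key observation, which I would state explicitly, is that the Fourier order of a term indexed by $p$ is exactly $p$, so the principal (highest-order) symbol is the contribution from $p=l+m$; setting $p=l+m$ isolates it.

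The final step is to read off the entries of the principal symbol $\gamma(l)$ from this $p=l+m$ contribution. Here I would invoke the invertibility of the Clebsch--Gordan matrix to decompose $t^{l+m}_{\psi^1_\sigma(l-k,j)+i,\,d-l-m}$ back into products $\sum_{u+s=d-l-m}C_J\,t^m_{\psi^1_\sigma(l-k,j),u}\,t^l_{i,s}$, thereby matching the computed action against the defining expression $T_\gamma(t^l_{ij})=[2l+1]_q\sum_{0\le k\le 2l}\gamma(l)_{l-k,j}\,t^l_{i,k-l}$ from Lemma \ref{Key}. Comparing coefficients of $t^l_{i,s}$ on both sides and collecting the surviving $t^m$-factor gives the stated identity for $\frac{1}{[2l+1]_q}\gamma(l)_{-s,j}$.

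The main obstacle I expect is the bookkeeping of indices: keeping straight which Clebsch--Gordan coefficient ($C_I$ versus $C_J$) carries which arguments, and verifying that the index shifts ($k-l$, $d-l-m$, $d-l-m-s$) are consistent after the two applications of Lemma \ref{Key} and the two uses of the product rule. A secondary subtlety is justifying that no lower-order ($p<l+m$) term can, after re-expansion, feed back into the principal part — this follows because re-expanding $t^p_{\cdot\cdot}$ with $p<l+m$ produces only factors $t^{m'}$ with $m'\le p-l<m$, hence strictly lower Fourier order, but it should be noted to make the "principal symbol" claim rigorous. The algebraic manipulations themselves are routine once the homogeneity of $\sigma$ and the product rule are in hand.
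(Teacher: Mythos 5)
Your proposal follows essentially the same route as the paper's own derivation: two applications of Lemma \ref{Key} interleaved with the Clebsch--Gordan product rule, isolation of the $p=l+m$ contribution as the principal part, and a re-expansion of $t^{l+m}_{\cdot\cdot}$ via the inverse Clebsch--Gordan matrix to read off $\gamma(l)_{-s,j}$ by comparing coefficients. Your added remark that lower-order terms ($p<l+m$) cannot feed back into the principal part is a worthwhile clarification the paper leaves implicit, but it does not change the argument.
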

 
\begin{exa}
The multiplication operator $M_{t_{ij}^m}$ by a fixed element basis $t_{ij}^m\in SU_q(2)$ is a global pseudo-differential operator of order $m$ with diagonal symbol $\sigma_{M_{t_{i,j}^{m}} }(l)=t_{i,j}^{m}I_{2l+1\times 2l+1}$. The composition $M_{t_{ij}^{m_1}}\circ M_{t_{rs}^{m_2}}$  of two multiplication operators is a global pseudo-differential operator of Fourier order $m_1+m_2$. 
\end{exa}

The following theorem shows that our definition of order for a symbol is in fact an order in the sense that we obtain a graded algebra of pseudo-differential operators.
In \cite{RSM} it has been  proved that any linear operator on $SU_q(2)$  is a pseudo-differential operator in sense adopted in this thesis. Using this fact we can prove the following 
\begin{teo}
Let $\sigma $ and $\beta$ be symbols in $\Phi^{k_1}(SU_q(2))$ and $\Phi^{k_2}(SU_q(2))$, respectively, for $k_1, k_2\in \frac{1}{2}\mathbb{Z}$. Then  there exists a symbol $c\in \Phi^{k_1+k_2}(SU_q(2)) $ such that $T_\sigma \circ T_\beta=T_c.$
 
\end{teo}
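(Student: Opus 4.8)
The statement asserts that composition of pseudo-differential operators respects the Fourier grading: if $\sigma\in\Phi^{k_1}$ and $\beta\in\Phi^{k_2}$, then $T_\sigma\circ T_\beta=T_c$ for some $c\in\Phi^{k_1+k_2}$. The first reduction is to the homogeneous case: writing $\sigma=\sum_i a_i\sigma_i$ and $\beta=\sum_j b_j\beta_j$ with $\sigma_i$ homogeneous of Fourier order $k_1$ and $\beta_j$ homogeneous of Fourier order $k_2$, linearity of $T_\bullet$ in the symbol gives $T_\sigma\circ T_\beta=\sum_{i,j}a_ib_j\,T_{\sigma_i}\circ T_{\beta_j}$, so it suffices to treat a single pair of homogeneous symbols $\sigma\in\Phi^{k_1}$, $\beta\in\Phi^{k_2}$ and show the composition has a symbol lying in $\Phi^{k_1+k_2}$ (a finite sum of homogeneous symbols of that order will be enough, since $\Phi^{k_1+k_2}$ is by definition closed under complex linear combinations).

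\textbf{Main computation.} For homogeneous $\sigma,\beta$ I would run essentially the computation already carried out before Theorem~\ref{Composition Formula II}, but now keeping \emph{all} the terms $|l-k_1|\le p\le l+k_1$ rather than only the top one $p=l+k_1$. Applying Lemma~\ref{Key} twice and the Clebsch--Gordan expansion $t^{k_1}_{rs}t^{n}_{ij}=\sum_{|n-k_1|\le p\le n+k_1}C(\,\cdot\,)\,t^p_{i+r,j+s}$, one writes $T_\sigma\circ T_\beta$ applied to a basis element $t^l_{ij}$ as a finite linear combination of basis elements $t^p_{\cdot,\cdot}$ with $p$ ranging over $\{|l-k_1|,\dots,l+k_1\}$ — shifted appropriately, with coefficients built from $\sigma$, $\beta$ and Clebsch--Gordan coefficients, and each such $t^p$ further decomposed via Clebsch--Gordan into $t^{m}_{\cdot\cdot}t^{l}_{\cdot\cdot}$ shapes. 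Reading off, from this expression, the matrix entries of the resulting symbol and checking that they land in $\mathsf{Span}\{t^{m}_{(\cdot,\cdot)}\}$ for the appropriate $m\in\{|k_2-k_1|,\dots,k_1+k_2\}$ realises the composition symbol as a finite sum of homogeneous pieces of Fourier orders between $|k_1-k_2|$ and $k_1+k_2$; the highest-order piece, of order $k_1+k_2$, is exactly the principal symbol identified in Theorem~\ref{Composition Formula II}. One must also verify the support/vanishing condition (ii) in the definition of homogeneous symbol for each homogeneous component: if the relevant index pair falls outside $I_{2m+1}\times I_{2m+1}$ for the corresponding $m$, the Clebsch--Gordan coefficient already forces the term to vanish, so condition (ii) is automatic.

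\textbf{Negative orders.} For $k_1$ or $k_2$ negative one unwinds the definition of negative Fourier order: $\sigma\in\Phi^{-m}$ means (on homogeneous generators) $T_\sigma$ has a two-sided inverse $T_\beta$ with $\beta\in\Phi^{m}$. Given $\sigma\in\Phi^{k_1}$, $\beta\in\Phi^{k_2}$ with, say, $k_1<0<k_2$, write $k_1=-m_1$ and take $\sigma'\in\Phi^{m_1}$ inverse to $\sigma$; then $T_\sigma\circ T_\beta$ is, by the positive-order case applied to $\sigma'$ and $\beta$, determined as the operator whose composition with $T_{\sigma'}$ (Fourier order $m_1$) gives $T_\beta$ (order $k_2$), forcing it into $\Phi^{k_2-m_1}=\Phi^{k_1+k_2}$; the remaining sign combinations are handled symmetrically. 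Finally, that the resulting $T_\sigma\circ T_\beta$ \emph{is} realised by some symbol $c$ at all is guaranteed by the cited fact from \cite{RSM} that every linear operator on $SU_q(2)$ is a pseudo-differential operator — so the only real content is that one particular choice of $c$ can be taken in $\Phi^{k_1+k_2}$, which the computation above exhibits.

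\textbf{Expected obstacle.} The bookkeeping in the double Clebsch--Gordan expansion is the delicate part: one has to track how the index maps $\psi_\sigma$, $\psi_\beta$ compose and how the shifts interact with the ranges $I_{2p+1}$, and then argue that the off-diagonal non-top terms organise into genuine homogeneous symbols of strictly lower Fourier order (rather than leaving some ``residue'' of mixed type). Getting the vanishing condition (ii) to hold for every homogeneous component — i.e. checking the Clebsch--Gordan coefficients vanish precisely where the definition requires the symbol entry to be zero — is where I expect to spend the most care; everything else is linearity plus the already-established Lemma~\ref{Key} and Theorem~\ref{Composition Formula II}.
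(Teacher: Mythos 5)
Your proposal follows essentially the same route as the paper: the nonnegative-order case is handled by the Clebsch--Gordan computation underlying Theorem \ref{Composition Formula II}, and the cases involving negative Fourier order are reduced to it via the defining invertibility of negative-order symbols, exactly matching the paper's case analysis by the signs of $k_1$ and $k_2$. The caveat you flag --- that the non-principal Clebsch--Gordan terms yield homogeneous components of strictly lower Fourier order, which must still be accounted for in $\Phi^{k_1+k_2}$ --- is a subtlety the paper's own proof also leaves implicit, since it too only verifies the principal symbol in the positive-order case.
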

\begin{proof}
We proceed considering different cases, depending on the signs of the orders and we suppose, without loss of generality, that $\sigma $ and $\beta$ are homogeneous.
\begin{itemize}
\item[i.] Case $k_1, k_2\geq 0:$ This case has been already proved in the theorem \ref{Composition Formula II}.
 \\ \item[ii.] Case $k_1<0$ and $k_2<0:$  By definition $T_\sigma$ and $T_\beta$ are both invertible with inverse operators $(T_\sigma)^{-1}$ and $(T_\beta)^{-1}$ with symbols of orders $-k_1>0$ and $-k_2>0.$ Clearly, as a consequence of theorem \ref{Composition Formula II}, the operator $(T_\beta)^{-1}\circ (T_\sigma)^{-1}$ belongs to $\Phi^{-(k_1+k_2)}(SU_q(2))$ and it is the inverse of the operator $T_\sigma \circ T_\beta$. 
\\  \item[iii.]  Case  $k_1>0$ and $k_2<0,$ and $k_1+k_2>0:$ Let $T_\sigma \circ T_\beta=R.$ Then we have that  $T_\sigma=R(T_\beta)^{-1},$ and this implies that $R$ must be a pseudo-differential operator of order less or equal to $k_1+k_2.$
\\  \item[iv.]  Case  $k_1>0$ and $k_2<0,$ and $k_1+k_2<0:$ Let $T_\sigma \circ T_\beta=R.$ Then we have that  $T_\beta=R(T_\alpha)^{-1},$ and this implies, taking into account item ii., that $R$ is a pseudo-differential operator of order less or equal to $k_1+k_2$. 
\end{itemize}
\end{proof} 

We finish this section with another  important aspect for a complete pseudo-differential calculus,  the adjoint operator of a pseudo-differential operator. We want the equality $\langle T_\sigma(t_{ij}^l), t_{rs}^m\rangle=\langle t_{ij}^l, (T_\sigma)^{*}(t_{rs}^m)\rangle$ to hold, and this implies that, for $(T_\sigma)^{*}:=T_\beta$,
$$\langle T_\sigma(t_{ij}^l), t_{rs}^m\rangle =\sum_{k=0}^{2l} h(\sigma_{ l-k,j}(l)t_{i,k-l}^l(t_{rs}^m)^{*})=\sum_{k=0}^{2m} h(t_{ij}^l (t_{r,k-l}^m)^{*}(\beta_{l-k,s}(m)^{*}))$$ $$=\langle t_{ij}^l, (T_\sigma)^{*}(t_{rs}^m)\rangle.$$
Now, if we take $\sigma \in  \Phi^p(SU_q(2)$ a homogeneous symbol of Fourier order $p\in \frac{1}{2}\mathbb{N}$, then the expression above is zero unless $|l-p|\leq m\leq l+p$. This implies that,  unless $|m-p| \leq l\leq m+p$, we must have $ \langle t_{ij}^l, (T_\sigma)^{*}(t_{rs}^m)\rangle=0 $. We conclude that the following statement holds.

\begin{teo}
Let $T_\sigma \in \Phi^{p}(SU_q(2))$ then the adjoint operator  of $T_\sigma$ is pseudo-differential operator of order  p.
\end{teo}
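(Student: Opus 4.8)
The plan is to read off the matrix entries of the symbol $\beta$ of $(T_\sigma)^*$ from the pairing identity displayed just before the statement, and then check that $\beta$ again satisfies the two defining conditions of a homogeneous symbol of Fourier order $p$ (hence, by summing, of a symbol of order $p$). First I would fix a homogeneous $\sigma\in\Phi^{p}(SU_q(2))$ with associated map $\psi_\sigma(l)$, so that $\sigma(x,l)_{i,j}\in\textsf{Span}\{t^{p}_{\psi_\sigma(l)(i,j)}\}$ and $\sigma(x,l)_{i,j}=0$ whenever $\psi_\sigma(l)(i,j)\notin I_{2p+1}\times I_{2p+1}$. Using Lemma \ref{Key} together with the orthogonality relation $h(t^{l}_{ij}(t^{l}_{rs})^{*})=\delta_{ir}\delta_{js}[2l+1]_q^{-1}q^{-2j}$, I would expand both sides of
$$\langle T_\sigma(t_{ij}^l), t_{rs}^m\rangle=\langle t_{ij}^l, T_\beta(t_{rs}^m)\rangle$$
over the Peter--Weyl basis, which forces $\beta(m)$ to be supported on those $(i,j,l)$ with $|m-p|\le l\le m+p$ and expresses each nonzero entry $\beta(m)_{\bullet,\bullet}$ as a complex multiple of some $t^{p}_{\,\cdot\,,\,\cdot\,}$ (the adjoint of the corresponding $t^{p}$-entry of $\sigma$, up to the $q$-powers and factors $[2l+1]_q^{-1}$ coming from the inner product). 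Concretely, the involution sends $t^{p}_{ab}$ to a scalar multiple of $t^{p}_{-b,-a}$ (via $a^*=d,\ c^*=-q^{-1}b$ and the explicit form of the $t^{l}_{ij}$ recalled in Section \ref{S:FAonSUq(2)}), so the $t^{p}$-degree is preserved; this is the crux of why the order does not change.

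Next I would verify the two homogeneity conditions for $\beta$. For condition (i), I would define $\psi_\beta(m)(r,s):=\psi_\sigma(\,\cdot\,)$ transported through the index bookkeeping above — essentially the ``transpose'' of $\psi_\sigma$ composed with the sign-flip $(a,b)\mapsto(-b,-a)$ induced by $t^{p}_{ab}\mapsto (t^{p}_{ab})^{*}$ — and check that the nonzero entries of $\beta(m)$ indeed land in $\textsf{Span}\{t^{p}_{\psi_\beta(m)(r,s)}\}$. For condition (ii), I would observe that if $\psi_\beta(m)(r,s)\notin I_{2p+1}\times I_{2p+1}$ then the matching entry of $\sigma$ was already zero by the homogeneity of $\sigma$ (the sign-flip maps $I_{2p+1}\times I_{2p+1}$ bijectively to itself), so the corresponding entry of $\beta$ vanishes. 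Together with the support restriction $|m-p|\le l\le m+p$ established from the pairing (which is the exact analogue of $|l-p|\le m\le l+p$ in the other direction), this shows $\beta\in\Phi^{p}(SU_q(2))$. For a general $\sigma=\sum c_k\sigma_k$ with $\sigma_k$ homogeneous of order $p$, I would take adjoints termwise: $(T_\sigma)^*=\sum\overline{c_k}(T_{\sigma_k})^*$, each summand of order $p$, hence $(T_\sigma)^*\in\Phi^{p}(SU_q(2))$.

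The main obstacle I anticipate is purely combinatorial rather than conceptual: tracking the index shifts and the $q$-power/$[2l+1]_q$ weights through the pairing so that the support map $\psi_\beta$ is correctly and consistently defined on all of $I_{2m+1}\times I_{2m+1}$, and confirming that the weights never vanish (they don't, since $q\in(0,1)$ and $[2l+1]_q\ne0$), so that ``$\sigma$-entry zero'' is genuinely equivalent to ``$\beta$-entry zero''. A secondary point to handle carefully is that the operator $T_\beta$ so produced is the genuine Hilbert-space adjoint on the GNS completion and not merely a formal adjoint on the dense span of the $t^{l}_{ij}$; here one invokes that, by \cite{RSM}, every linear operator on $SU_q(2)$ is pseudo-differential, so once $\beta$ is exhibited as a symbol the identification $(T_\sigma)^*=T_\beta$ is automatic, and boundedness issues reduce to the already-noted mapping properties of these operators.
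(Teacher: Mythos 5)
Your proposal is correct and follows essentially the same route as the paper: you read off the entries of $\beta$ from the pairing identity $\langle T_\sigma(t_{ij}^l), t_{rs}^m\rangle=\langle t_{ij}^l, T_\beta(t_{rs}^m)\rangle$ via Lemma \ref{Key} and the orthogonality relations, and derive the support restriction $|m-p|\le l\le m+p$ from the Clebsch--Gordan structure, which is exactly the paper's argument. In fact you go slightly further than the paper, which stops at the support condition: your observation that the involution sends $t^{p}_{ab}$ to a scalar multiple of a degree-$p$ matrix element (so that the entries of $\beta$ genuinely satisfy conditions (i) and (ii) of the homogeneity definition) fills in a step the paper leaves implicit.
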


\section{Final comments on the spectral properties of global pseudo-differential pperators on $SU_q(2)$ }

In this section we use the theory developed in section \ref{S:PDOsSUq(2)}  to obtain several results concerning  spectral properties of very particular types of (symbols for) global  pseudo-differential operators with symbols in the classes $\Phi^m(SU_q(2))$. Let us begin by the following direct consequence of lemma \ref{Key}:
\begin{coro} 
 Let  $\sigma \in \Phi^m(SU_q(2))$   such that $\sigma(l)=0$ for $l> N$  for some $N\in \mathbb{N}$, then $T_\sigma$ is of finite rank.
\end{coro}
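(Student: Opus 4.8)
The plan is to read the statement off directly from Lemma~\ref{Key}. That lemma gives $T_\sigma(t_{ij}^{l})=\sum_{k=0}^{2l}\sigma_{l-k,j}(l)\,t_{i,k-l}^{l}$, so the hypothesis $\sigma(l)=0$ for $l>N$ immediately forces $T_\sigma(t_{ij}^{l})=0$ for every $l>N$ and all admissible $i,j$. Hence the range of $T_\sigma$ is already determined by its action on the finitely many basis elements $t_{ij}^{l}$ with $l\in\{0,\tfrac12,1,\dots,N\}$ and $-l\le i,j\le l$.

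It then remains to verify that the image of each of these basis elements lies in a fixed finite-dimensional space. When $m=0$ the entries $\sigma_{l-k,j}(l)$ are scalars and Lemma~\ref{Key} exhibits $T_\sigma(t_{ij}^{l})$ as a linear combination of the $t_{i,k-l}^{l}$, $0\le k\le 2l$, so there is nothing more to do. In general, since $\sigma\in\Phi^{m}(SU_q(2))$, each entry $\sigma_{l-k,j}(l)$ lies in $\textsf{Span}\{t_{rs}^{m}:-m\le r,s\le m\}$, so every summand $\sigma_{l-k,j}(l)\,t_{i,k-l}^{l}$ is a linear combination of products $t_{rs}^{m}\,t_{i,k-l}^{l}$. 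By the Clebsch--Gordan decomposition recalled before Theorem~\ref{Composition Formula II}, each such product is a linear combination of basis elements $t^{p}$ with $|l-m|\le p\le l+m$; consequently $T_\sigma(t_{ij}^{l})$ belongs to the finite-dimensional subspace $V_{l}:=\textsf{Span}\{t_{rs}^{p}:|l-m|\le p\le l+m,\ -p\le r,s\le p\}$. Therefore the image under $T_\sigma$ of the algebraic span of all the $t_{ij}^{l}$ is contained in $\sum_{0\le l\le N}V_{l}$, which is finite-dimensional.

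To finish, I would invoke that the algebraic span of the $t_{ij}^{l}$ is dense in the domain of $T_\sigma$ and that $T_\sigma$ is continuous: its range is then contained in the closure of the finite-dimensional space $\sum_{0\le l\le N}V_{l}$, hence in that space itself, so $T_\sigma$ has finite rank (indeed $\operatorname{rank}T_\sigma\le\dim\big(\sum_{0\le l\le N}V_{l}\big)$). The only mildly delicate point is the bookkeeping in the non-scalar case --- keeping track of the fact that multiplication by the fixed $t_{rs}^{m}$ cannot move $t_{i,k-l}^{l}$ outside the finitely many co-representation blocks with label between $|l-m|$ and $l+m$ --- but this is precisely the Clebsch--Gordan structure already exploited in the composition formulas, so no new argument is needed and I do not anticipate any genuine obstacle.
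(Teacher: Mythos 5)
Your proof is correct and follows exactly the route the paper intends: the corollary is stated there without proof as a ``direct consequence of Lemma~\ref{Key}'', and your observation that $\sigma(l)=0$ for $l>N$ annihilates all basis elements $t_{ij}^{l}$ with $l>N$, leaving a range spanned by the images of finitely many basis vectors, is precisely that argument. (Your Clebsch--Gordan bookkeeping is harmless but not needed --- the span of finitely many vectors is finite-dimensional regardless of where those vectors land.)
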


\begin{teo}
Let $\sigma \in \Phi^0(SU_q(2))$. If $\displaystyle \lim_{l \rightarrow \infty}\|\sigma(l)\|_{op}=0$ then $T_\sigma$ is a compact operator.
\end{teo}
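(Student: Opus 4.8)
The plan is to show that $T_\sigma$ is a norm limit of finite rank operators, hence compact. First I would use the preceding corollary as the source of finite rank approximants: for each $N\in\mathbb{N}$ define the truncated symbol $\sigma_N$ by $\sigma_N(l)=\sigma(l)$ for $l\le N$ and $\sigma_N(l)=0$ for $l>N$, so that $\sigma_N\in\Phi^0(SU_q(2))$ vanishes beyond $N$ and the corollary gives that $T_{\sigma_N}$ is of finite rank. It then suffices to prove that $\|T_\sigma-T_{\sigma_N}\|_{op}\to 0$ as $N\to\infty$, where the operator norm is taken with respect to the Hilbert space norm $\|f\|^2=h(ff^*)$ on $SU_q(2)$.

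Next I would estimate $\|(T_\sigma-T_{\sigma_N})f\|$ for $f=\sum f_{ij}^l t_{ij}^l$. Since $T_\sigma-T_{\sigma_N}=T_{\sigma-\sigma_N}$ and $\sigma-\sigma_N$ vanishes for $l\le N$, Plancherel's Identity together with the Corollary to Lemma \ref{Key} gives an expression for $\|T_{\sigma-\sigma_N}(f)\|^2$ as a sum over $l>N$ of terms controlled by the matrix entries of $\sigma(l)$ and the Fourier coefficients of $f$. The point is that each block contributes a quantity bounded by (a constant times) $\|\sigma(l)\|_{op}^2$ times the corresponding piece of $\|f\|^2$; summing over $l>N$ yields $\|T_{\sigma-\sigma_N}(f)\|^2 \le \big(\sup_{l>N}\|\sigma(l)\|_{op}^2\big)\,\|f\|^2$, so that $\|T_\sigma-T_{\sigma_N}\|_{op}\le \sup_{l>N}\|\sigma(l)\|_{op}$. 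By hypothesis $\|\sigma(l)\|_{op}\to 0$, so this supremum tends to $0$ as $N\to\infty$, and $T_\sigma$ is a uniform limit of finite rank operators, hence compact.

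The main obstacle is the block-diagonal estimate in the middle step: one must be careful that the weights $[2l+1]_q^{-1}q^{-2(k-l)}$ appearing in the Corollary to Lemma \ref{Key}, together with the analogous weights in Plancherel's Identity for $f$ itself, cancel so that the bound is genuinely in terms of $\|\sigma(l)\|_{op}$ and $\|f\|$, with constants independent of $l$. Concretely, since $T_\sigma$ is block-diagonal at level $l$ (it maps the span of $\{t_{ij}^l:-l\le i,j\le l\}$ into itself when $\sigma$ has complex entries — note this uses that $\sigma\in\Phi^0$, so the entries are scalars), its restriction to each finite-dimensional block is, up to the fixed weighting given by $D_q$, just the linear map given by the matrix $\sigma(l)$, whose norm relative to the weighted inner product is comparable to $\|\sigma(l)\|_{op}$; assembling these blocks gives the global estimate. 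One should record explicitly why the weighting does not introduce an $l$-dependent constant (it is an orthogonality/normalization bookkeeping argument using $h(t_{ij}^{(l)}(t_{ij}^{(l)})^*)=[2l+1]_q^{-1}q^{2j}$), after which the conclusion is immediate.
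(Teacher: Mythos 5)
Your proposal is correct and follows essentially the same route as the paper: truncate the symbol at level $N$ to get finite-rank approximants (the paper uses cut-off functions $g_n$, which give exactly your $\sigma_N$), then use Plancherel's Identity to bound $\|T_\sigma-T_{\sigma_N}\|_{op}$ by $\sup_{l>N}\|\sigma(l)\|_{op}$ and conclude compactness as a norm limit of finite-rank operators. Your extra remark about checking that the $q$-weights do not introduce $l$-dependent constants is a reasonable point of care, and the paper handles it implicitly by working with the trace expression $Tr(\sigma(l)^*\sigma(l)\hat{f}(l)\hat{f}(l)^*)$ rather than entrywise.
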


\begin{proof}
Consider a symbol $\sigma \in \Phi^0(SU_q(2))$ and the sequence of functions $(g_n)_{n\in \mathbb{N}}$, where  $g_n: \frac{1}{2}\mathbb{N}\rightarrow \{0,1\}$  is defined by $g_n(l)=0$ for $l>n$, and $g_n(l)=1$ for $0\leq l\leq n$. Then $T_{g_n \sigma}$ has finite rank and, using Plancherel identity and the Hilbert-Schmidt norm inequality, we have that $$\|(T_\sigma-T_{g_n \sigma})f \|_{SU_q(2)}^2=h((T_\sigma-T_{g_n \sigma})f ((T_\sigma-T_{g_n \sigma})f)^{*})$$ $$=\sum_{l>n}[2l+1]_qTr( \sigma(l)^{*}\sigma(l) \hat{f}(l)(\hat{f}(l))^{*})  $$ $$\leq\sum_{l>n}[2l+1]_q\|\sigma(l) \|_{op}Tr(  \hat{f}(l)(\hat{f}(l))^{*}) $$ 
$$\leq \sup_{l>n}\|\sigma(l) \|_{op}\sum_{l>n}[2l+1]_qTr(  \hat{f}(l)(\hat{f}(l))^{*}) $$ 
$$\leq \sup_{l>n}\|\sigma(l) \|_{op}\|f\|_{SU_q(2)}^2 .$$ From this we see that  $\|(T_\sigma-T_{g_n \sigma})\| _{op} \rightarrow 0$ as $n\rightarrow \infty,$ thus  $T_\sigma$ is a compact operator
\end{proof}

In some special cases it is possible to find, from information on the symbol, the eigenvalues of the corresponding operator $T_\sigma.$
\begin{teo}
Let $\lambda: \frac{1}{2}\mathbb{N}\rightarrow \mathbb{C}$. Suppose that for each  $-l\leq i_0\leq l$  the symbol $\sigma \in \Phi^0(SU_q(2))$ satisfies that $\sum_j\sigma_{i_0j}(l)=\lambda(l)$.Then  $(\lambda(l))_{l\in \frac{1}{2}\mathbb{N}\ }\subseteq spec(T_\sigma).$ Furthermore, the multiplicity of $\lambda(l)$ is greater or equal than $2l+1$.

\end{teo}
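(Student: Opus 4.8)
The plan is to exhibit, for each fixed $l\in\frac12\mathbb{N}$, an explicit $(2l+1)$-dimensional space of eigenvectors of $T_\sigma$ with eigenvalue $\lambda(l)$. This is the Peter-Weyl analogue of the elementary linear-algebra fact that a square matrix all of whose row sums equal $\lambda$ has the all-ones vector as a $\lambda$-eigenvector. Concretely, for each index $i$ with $-l\le i\le l$ I would set
$$v_i^l:=\sum_{j=-l}^{l}t_{ij}^l\in SU_q(2),$$
and the goal is to check that $T_\sigma v_i^l=\lambda(l)\,v_i^l$.

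First I would invoke Lemma \ref{Key}. Since $\sigma\in\Phi^0(SU_q(2))$ all the entries $\sigma_{l-k,j}(l)$ are scalars, and the lemma gives $T_\sigma(t_{ij}^l)=\sum_{k=0}^{2l}\sigma_{l-k,j}(l)\,t_{i,k-l}^l$. Summing over $j$ and interchanging the two finite sums yields
$$T_\sigma(v_i^l)=\sum_{j=-l}^{l}T_\sigma(t_{ij}^l)=\sum_{k=0}^{2l}\Big(\sum_{j=-l}^{l}\sigma_{l-k,j}(l)\Big)\,t_{i,k-l}^l .$$
This is the only point where the hypothesis enters: as $k$ runs over $\{0,1,\dots,2l\}$ the index $i_0:=l-k$ runs over all of $\{-l,\dots,l\}$, so by assumption $\sum_j\sigma_{l-k,j}(l)=\lambda(l)$ for every such $k$. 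Hence $T_\sigma(v_i^l)=\lambda(l)\sum_{k=0}^{2l}t_{i,k-l}^l=\lambda(l)\,v_i^l$. Because the family $\{t_{ij}^l\}$ is an orthogonal basis of $SU_q(2)$ (Section \ref{S:FAonSUq(2)}), $v_i^l\neq 0$, so $\lambda(l)\in spec(T_\sigma)$.

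For the multiplicity claim I would note that the $2l+1$ vectors $v_{-l}^l,\dots,v_l^l$ are linearly independent: for distinct values of the first index $i$, the elements $v_i^l$ are supported on disjoint subsets $\{t_{ij}^l:-l\le j\le l\}$ of the Peter-Weyl basis, so any vanishing linear combination forces all coefficients to be zero. Since all of them lie in the $\lambda(l)$-eigenspace of $T_\sigma$, that eigenspace has dimension at least $2l+1$ (and if $\lambda$ happens to take the same value at several levels the true multiplicity is even larger, so the stated lower bound still holds). The argument is short; the only steps requiring care are the index bookkeeping in the sum over $j$ — checking that $l-k$ exhausts $\{-l,\dots,l\}$ so that the row-sum hypothesis applies uniformly — and the disjoint-support observation used for the multiplicity bound. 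No boundedness of $T_\sigma$ is needed, since the statement only concerns the point spectrum and the $v_i^l$ already lie in the dense subalgebra $SU_q(2)$ of its GNS completion.
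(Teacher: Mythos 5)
Your proposal is correct and follows essentially the same route as the paper: both apply Lemma \ref{Key} to the vector $\sum_j t_{ij}^l$, interchange the two finite sums, and invoke the row-sum hypothesis with $i_0=l-k$ to identify it as a $\lambda(l)$-eigenvector. Your explicit verification of the multiplicity bound via the disjoint supports of $v_{-l}^l,\dots,v_l^l$ is a detail the paper leaves implicit, but the underlying argument is the same.
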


\begin{proof}
 Suppose that $\sigma $ satisfies the condition of the theorem. Then $$T_\sigma(\sum_j t_{ij}^l)=\sum_j\left(\sum_{0\leq r\leq 2l} \sigma_{l-r,j}(l)t_{i,r-l}^l\right)$$ $$=\sum_{0\leq r\leq 2l} \left(\sum_{j} \sigma_{l-r,j}(l)\right)t_{i,r-l}^l=\lambda(l)\sum_j t_{ij}^l.$$
\end{proof}
Notice that this last result holds in the case of a diagonal matrix symbol.
\\ \\
Recall that the {\em index} of a Fredholm operator $T: \mathcal{H} \to \mathcal{H}$ acting on a Hilbert space  $\mathcal{H}$, is defined as $${\rm ind}\, (T):= \dim(\ker T)- \dim({\rm coker}\, T).$$ Using the notion of Fourier order we can compute directly the index of particular classes of global pseudo-differential operators.

\begin{teo}
Let $\sigma \in \Phi^m(SU_q(2))$ be a  symbol of  Fourier order $m\in \frac{1}{2}\mathbb{N}$ such that  $\sigma(l)_{ij}\in \textsf{Span}\{t^m_{rs}: -m\leq r,s\leq  m\}$ for $0\leq  l \leq N-\frac{1}{2}$,  and $\sigma(l)_{ij}\in \textsf{Span}\{1\}$ for $l\geq  N$ for some half-natural number N. Then  $T_\sigma$ is a Fredholm operator and its index is given by $${\rm ind}\,(T_\sigma) ={4 \over 3}m^2(m-1) + 4N(2N+Nm-1).$$   
\end{teo}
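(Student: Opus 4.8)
The plan is to prove Fredholmness by realizing $T_\sigma$ as a finite-rank perturbation of a well-understood invertible operator, and then to compute the index from an explicit description of the kernel and cokernel, which are concentrated in finitely many Fourier modes. For the first part I would work with the Peter--Weyl decomposition $SU_q(2)=\bigoplus_{l\in\frac{1}{2}\mathbb{N}}\mathcal{H}_l$, where $\mathcal{H}_l=\textsf{Span}\{t_{ij}^l:-l\le i,j\le l\}$, together with the explicit action on the basis given by Lemma \ref{Key}. For $l\ge N$ the entries $\sigma(l)_{ij}$ are scalars, so Lemma \ref{Key} shows that on the tail $\bigoplus_{l\ge N}\mathcal{H}_l$ the operator is block diagonal with the (row-reversed) matrices $\sigma(l)$ as its blocks, and one checks these to be invertible for all but finitely many $l$; on the other hand, for $l\le N-\frac{1}{2}$ the order-$m$ part of $\sigma$ sends $\mathcal{H}_l$ into $\bigoplus_{|l-m|\le p\le l+m}\mathcal{H}_p$, which reaches only finitely many blocks, at most up to $\mathcal{H}_{N-\frac{1}{2}+m}$. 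Hence $T_\sigma$ coincides with an invertible operator off the finite-dimensional subspace $\mathcal{L}=\bigoplus_{l\le N-\frac{1}{2}+m}\mathcal{H}_l$, so it is Fredholm.

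For the index I would use its invariance under finite-rank perturbations to reduce to the behaviour of $T_\sigma$ on $\mathcal{L}$, and then determine $\ker T_\sigma$ and $\operatorname{coker}T_\sigma$ inside $\mathcal{L}$ explicitly. Combining Lemma \ref{Key} with the Clebsch--Gordan expansion $t_{rs}^m t_{ij}^l=\sum_{|l-m|\le p\le l+m}C(l,m,p;r,s,i,j)\,t_{i+r,j+s}^p$, one writes out $T_\sigma(t_{ij}^l)$ for every basis vector with $l\le N-\frac{1}{2}+m$; the kernel is spanned by those low-$l$ basis vectors whose image is forced to vanish by conditions (i)--(ii) in the definition of a homogeneous symbol of order $m$ (the regime in which the target index box $I_{2m+1}\times I_{2m+1}$ is too narrow to accommodate the shifted indices), while the cokernel is spanned by the basis vectors of $\mathcal{L}$ that are not hit, read off from the same index analysis. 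Summing the sizes of these two families over $l\in\{0,\frac{1}{2},\dots,N-\frac{1}{2}+m\}$ and simplifying the resulting finite sums of polynomials in $l$ should produce $\frac{4}{3}m^2(m-1)+4N(2N+Nm-1)$.

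The main obstacle is the bookkeeping in this last step: one has to show that the relevant finite matrices on $\mathcal{L}$ have maximal rank, so that $\dim\ker T_\sigma$ and $\dim\operatorname{coker}T_\sigma$ are given \emph{exactly}, and not merely bounded, by the index count, and then evaluate the arithmetic sums with care for the boundary regime $l<m$ and for the half-integer range of $l$. This is also where one must be precise about how much of the tail the order-$m$ head actually reaches and which modes it fails to cover, since this interplay between the head and the scalar tail is what produces the stated value. The Fredholmness step, by contrast, is routine once the eventual block-diagonal and invertible structure of the symbol on the tail is exploited.
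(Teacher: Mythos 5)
Your structural setup coincides with the paper's: the Peter--Weyl blocks $H^l=\textsf{Span}\{t_{ij}^l\}$, Lemma \ref{Key} to see that the scalar tail acts block-diagonally with $T_\sigma(H^l)\subseteq H^l$ for $l\ge N$, and the Clebsch--Gordan expansion to see that the order-$m$ head spreads $\bigoplus_{l\le N-\frac{1}{2}}H^l$ into only finitely many further blocks. Where you diverge is in how the index of the finite-dimensional part is extracted, and this is where your plan has a genuine gap. You propose to determine $\dim\ker T_\sigma$ and $\dim\operatorname{coker}T_\sigma$ \emph{exactly} by proving that the relevant finite matrices have maximal rank. That claim is not provable from the hypotheses: an entry $\sigma(l)_{ij}\in\textsf{Span}\{t^m_{\psi_\sigma(l)(i,j)}\}$ is merely a scalar multiple of a single basis element, and the scalar may vanish, so the hypotheses do not constrain the ranks of these matrices at all --- they can be arbitrarily degenerate, in which case $\dim\ker$ and $\dim\operatorname{coker}$ both jump while only their difference is stable. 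The paper sidesteps this entirely by using the one fact that makes the computation tractable: for a linear map between finite-dimensional spaces, $\dim\ker-\dim\operatorname{coker}$ equals $\dim(\text{source})-\dim(\text{target})$ regardless of the map, so the head contributes a pure dimension count (the paper's $\sum_{l=N}^{N+m}(2l+1)^2$) and each tail block, being an endomorphism of a finite-dimensional space, contributes $0$. You should replace the kernel/cokernel analysis by this stability argument; as written, your route cannot be completed.

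Two further cautions. First, your Fredholmness step leans on the tail blocks $\sigma(l)$, $l\ge N$, being invertible (indeed uniformly invertible, so that the inverse is bounded); this is not among the hypotheses and cannot be ``checked'' from them. The paper assumes it silently, but you should state it as an added assumption rather than a verification. Second, once $T_\sigma$ is written in block lower-triangular form with a finite-dimensional upper-left block mapping $\bigoplus_{l\le N-\frac{1}{2}}H^l$ into itself-plus-finitely-many-tail-blocks and an invertible block-diagonal tail, the honest dimension count forces the total index to vanish (already for $m=0$ the operator is block diagonal and plainly has index $0$, while the stated formula gives $4N(2N-1)\neq 0$). So even after repairing the method you will not reproduce the value $\frac{4}{3}m^2(m-1)+4N(2N+Nm-1)$; expect to confront that discrepancy rather than to confirm the formula by summing your index counts.
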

  
\begin{proof}

Let $H^l:=\textsf{Span}\{t_{ij}^l: -l\leq i,j\leq l\}$ and let $\sigma$ be a homogeneous symbol satisfying the hypothesis of the theorem. Observe first that,  for  any $\beta \in \Phi^0(SU_q(2)) $,  we have   ${\rm ind}\, T_\beta=0$. This is just because we can think that the pseudo-differential operator is direct sum of linear operator acting on the finite dimensional spaces $H^l.$ Then, by the Clebsch-Gordan decomposition of the products 
 we can see $$T_\sigma \big(\displaystyle\bigoplus_{0\leq N} H^l\big)\subseteq \displaystyle \bigoplus_{0\leq l\leq N+m} H^l$$ and also that $T_\sigma(H^l)\subseteq H^l,$  from which we conclude that $${\rm ind}\, (T_\sigma) \rvert _{\displaystyle\bigoplus_{0\leq N} H^l} + \,\,  \sum_{l\geq N } {\rm ind}\,(T_\sigma) \lvert _{H^l}=\sum_{l=N}^{N+m}(2l+1)^2,$$
and the explicit computation of the sum gives the result.
\end{proof}


\begin{rem}
We finally point out that, in the definition of global pseudo-differential operators, we used the natural extension of the definition of the trace of a matrix. However, this is {\em not } a trace if we consider matrices with entries in  the non-commutative algebra $SU_q(2)$. In order to know what are the traces for these kind of matrices,  we recall  that the linear operator $Tr^0: SU_q(2)\rightarrow \mathbb{C}$ defined  by $Tr^0(f)=\hat{f}(0)$ is a trace (called {\em the non-commutative integral}), and we define for each $l\in \frac{1}{2}\mathbb{N}$ the complex valued  operator $Tr^l:M_{2l+1\times 2l+1}(\mathbb{C})\otimes SU_q(2)\rightarrow \mathbb{C}$ by $Tr^l(A)=\sum_i Tr^0(A_{ii})$. It is straightforward to see that these are traces and that in fact they are the unique traces satisfying $Tr(1)=1$, and $Tr^l(I_{2l+1\times 2l+1})=2l+1$ for all $l\in \mathbb{N}$. Both traces and determinants for global pseudo-differential operators on quantum groups will be considered in a separate paper.

\end{rem}


\vspace{1cm}


\begin{thebibliography}{20}
\addcontentsline{toc}{chapter}{Bibliography}


\bibitem{RSM}  Akylzhanov, R., Majid, S. and  Ruzhansky M. {\em Smooth dense subalgebras and Fourier Mulltipliers on Quantum Groups}. Preprint (2017) http://arxiv.org/abs/1705.10802.

\bibitem{BK} Bibikov P.N. and Kulish P.P. {\em Dirac operators on quantum SU(2) group and quantum sphere}. J. Math. Sci. (2000), Vol. 100, No. 2, pp. 2039-2050. 

\bibitem{Goswami}  Chakraborty, P.S. and  Pal, A. {\em Equivariant spectral triples on the quantum $SU(2)$ group}. K-Theory (2003) 28(2), pp.107-126.

\bibitem{ConnesSUq(2)}  Connes, A. {\em Cyclic Cohomology, Quantum Group Symmetries and the Local Index Formula for $SU_q(2)$}. Journal of the Institute of Mathematics of Jussieu, 3(1), pp. 17-68, 2004. 
        
\bibitem{Connes}  Connes,  A. and Landi G. {\em Noncommutative manifolds, the instanton algebra and isospectral
deformations}. Comm. Math. Phys., 221(1), pp. 141-159, 2001.   

\bibitem{Connes 2}  Connes, A. {\em Noncommutative geometry}. Academic Press, 1994.

\bibitem{VS}   Dabrowski L.,  Landi G., Sitarz  A., van Suijlekom W.  and  Varilly J. {\em The Dirac operator on $SU_q(2)$}. Commun. Math. Phys., 259(3), pp. 729-759, 2005.
        
\bibitem{DR}    Delgado J., Ruzhansky M., {\em Fourier multipliers, symbols and nuclearity on compact manifolds}, arXiv:1404.6479, J. Anal. Math., to appear.
        
\bibitem{Drinf}  Drinfeld V. G., {\em Quantum groups}, Proceedings of the International Congress of Mathematicians Berkeley, pp.793-820, 1986.
      

\bibitem{K} Kassel C., {\em Quantum Groups}. Graduate Texts in Mathematics Vol. 155, Springer-Verlag,1995.

\bibitem{MMNNU:k} Masuda, T, Mimachi, K., Nakagami, Y., Noumi, M. and  Ueno, K. {\em Representations of the quantum group $SU(2)$ 
 and the little q-Jacobi polynomials}.  J. Funct. Anal., 99 (1991), pp. 357-386.

\bibitem{KS} Klimik  A. and Schm\"udgen K. {\em Quantum Groups and their Representations}. Springer-Verlag, 1997.


\bibitem{R.andT} Ruzhansky M. and Turunen V. {\em Quantization of pseudo-diferencial operators on the torus}. J. Fourier Anal. Appl. (2010) 16 (6), pp. 943-982.

\bibitem{R.Tbook} Ruzhansky M. and Turunen V.  {\em Pseudo-Differential Operators and Symmetries}. Birkh\"{a}user, Basel, 2010.


\bibitem{W}  Woronowicz, S.L. {\em Twisted  $SU(2)$ Group. An Example of a Non-Commutative Differential Calculus}. Publ.RIMS, Kyoto Univ., Vol. 23, pp. 117-181, 1987.


\bibitem{W2} Woronowicz, S. L. {\em Compact matrix pseudo groups}, Comm. Math. Phys. 111(4), pp. 613-665, 1987.








\end{thebibliography}
\end{document}